\documentclass[11pt]{article}

\usepackage[a4paper,margin=1in]{geometry}
\usepackage{amsmath,amssymb,amsfonts,amsthm}
\usepackage{graphicx}
\usepackage{booktabs}
\usepackage{hyperref}
\usepackage{enumitem}

\usepackage{pgfplots}

\usepackage{tikz}
\usetikzlibrary{arrows.meta,positioning,shapes.geometric,calc,patterns}

 \newcommand{\citep}{\cite}
 
\usetikzlibrary{decorations.pathreplacing}

\newtheorem{definition}{Definition}
\newtheorem{assumption}{Assumption}
\newtheorem{proposition}{Proposition}
\newtheorem{theorem}{Theorem}

\def\beq{\begin{equation}}
\def\eeq{\end{equation}}
\def\eq#1{(\ref{#1})}

\def\bar#1{ \begin{array}{#1}}
	\def\ear{\end{array} }

\title{A Delayed Generalized Lotka--Volterra Model for Threshold Instability and Structural Recognition}

\author{Carlo Cattani}

\date{}

\begin{document}

\maketitle

\begin{center}	
	Engineering School, DEIM,  University of Tuscia, (VT), 01100, Italy
	e-mail: cattani@unitus.it	
\end{center}

\begin{abstract}
This paper develops a delayed generalized Lotka--Volterra model for systems in which structural change and recognition of change are not synchronized. The starting point is the idea that a system may accumulate internal transformations while remaining locally stable at the level of interpretation, governance, or perception. Instability becomes effective only when accumulated structural stress crosses a threshold and is recognized after a delay.

We introduce a vector of civilizational state variables including systemic complexity, informational entropy, systemic pressure, resilience, and legitimacy. Their interaction is modeled through a delayed generalized Lotka--Volterra system. The delays represent the temporal separation between structural variation and its cognitive, institutional, or social effects. A composite instability functional is then defined, and threshold crossing is interpreted as the transition from latent instability to recognized instability.

The paper studies positivity, equilibria, linearization, and local stability of the delayed system. Particular attention is given to delay-induced instability: even when the corresponding non-delayed system is locally stable, sufficiently large delays may destabilize the equilibrium and generate oscillatory or nonlinear behavior. The model provides a mathematical framework for delayed recognition, threshold instability, and the emergence of crises in complex adaptive systems.
\end{abstract}

\noindent\textbf{Keywords:} delayed differential equations; generalized Lotka--Volterra systems; instability thresholds; delayed recognition; civilizational dynamics; resilience; complexity; legitimacy.

\noindent\textbf{MSC 2020:} 34K20; 34K60; 37N25; 37C10; 91D10; 91D30.

\section{Introduction}

Many complex systems do not become unstable at the same moment in which instability is recognized. Structural deterioration may accumulate gradually, while agents, institutions, or observers continue to interpret the system as locally stable. The recognition of instability may therefore occur only after a delay.

The present work is situated at the intersection of delayed differential equations, generalized Lotka--Volterra dynamics, and threshold models for critical transitions. Delay differential equations provide a natural framework for systems whose present evolution depends on past states; in particular, Rihan~\cite{rihan2021dde} gives a broad treatment of qualitative and numerical methods for DDEs and emphasizes that memory terms may generate dynamics that are not present in the corresponding ordinary differential equations. Within the Lotka--Volterra literature, Saeedian et al.~\cite{saeedian2022delay} show that delays can substantially modify the stability patterns of generalized Lotka--Volterra systems, moving feasible communities from equilibrium regimes to oscillatory or non-point attractor regimes. Chen and Jiang~\cite{chenjiang2021turinghopf} study delayed Lotka--Volterra competition systems and show how nonlocal delays may generate Turing--Hopf bifurcations and multiple spatiotemporal coexistence states. Complementarily, Liu et al.~\cite{liu2023feasibility} analyze feasibility and stability in large Lotka--Volterra systems with structured interactions, clarifying the distinction between the existence of admissible equilibria and their dynamical stability. Finally, the literature on critical transitions and early-warning signals has emphasized that complex systems may lose resilience before a transition becomes macroscopically visible; recent contributions such as Xu et al.~\cite{xu2023nonequilibrium} and Diekert et al.~\cite{Diekert2025EWSDecisions} stress the need to connect detection of instability with interpretable dynamical mechanisms. 

The originality of the present paper lies in combining these strands into a single delayed-threshold framework. Unlike standard delayed Lotka--Volterra models, which usually focus on population abundances, feasibility, persistence, or ecological stability, we introduce a five-variable structural interpretation based on complexity, informational entropy, systemic pressure, resilience, and legitimacy. Unlike standard threshold or early-warning approaches, the threshold is not imposed as an external diagnostic only; it is coupled to a delayed generalized Lotka--Volterra system through the scalar functional
\beq \label{eq:omega-functional}
\Omega(\tau)
=
\alpha C(\tau)
+
\beta H(\tau)
+
\gamma P(\tau)
-
\eta R(\tau)
-
\mu L(\tau) \quad , \qquad (\alpha  , \beta  , \gamma  , \eta  , \mu \geq 0) 
\eeq
where $C$ denotes systemic complexity, $H$ informational entropy, $P$ systemic pressure, $R$ resilience, and $L$ legitimacy. Complexity, entropy, and pressure are treated as destabilizing variables, while resilience and legitimacy are treated as stabilizing variables.

The main contribution is therefore twofold: first, the paper formulates a delayed generalized Lotka--Volterra model in which recognition and structural change are explicitly separated by interaction delays; second, it shows how positivity, equilibria, local stability, delay-induced instability, and threshold crossing can be studied within the same mathematical framework. In this sense, delayed recognition is not treated merely as a qualitative metaphor, but as a dynamical mechanism capable of changing the stability properties of the system.

The central aim of this paper is to translate this idea into a mathematical model based on delayed differential equations. The conceptual motivation is simple: if structural change and recognition of change are not simultaneous, then the interaction terms in a dynamical system should not always be instantaneous. Delays are not merely technical parameters; they encode the time required for one variable to become effective upon another.

We focus on a generalized Lotka--Volterra system with delays:
\beq\label{eq:general-delayed-lv}
\frac{dx_i}{d\tau}
=
x_i(\tau)
\left(
r_i+\sum_{j=1}^{n}a_{ij}x_j(\tau-\delta_{ij})
\right),
\qquad i=1,\ldots,n.
\eeq
Here $x_i(\tau)$ are non-negative state variables, $r_i$ are intrinsic growth or decay rates, $a_{ij}$ are interaction coefficients from $x_j$ to $x_i$, and $\delta_{ij}\geq 0$ are delays with which this interaction becomes effective. When $\delta_{ij}=0$, the effect of $x_j$ on $x_i$ is instantaneous. When $\delta_{ij}>0$, the effect becomes operative only after a delay.
The model is applied to a five-variable system $n=5$:
\beq \label{eq:five-variable-vector}
x(\tau)=
\big(C(\tau),H(\tau),P(\tau),R(\tau),L(\tau)\big),
\eeq

The paper first introduces delayed recognition and the threshold functional, then studies positivity, equilibria, linearization, and delay-induced instability of the delayed generalized Lotka--Volterra system. The final section presents numerical scenarios illustrating the transition from stable adaptation to threshold crossing.


\section{From Delayed Recognition to Delayed Dynamics}
\label{sec:delayed-recognition}

The purpose of this section is to translate the qualitative principle of
delayed recognition into a dynamical model with explicit delays. The guiding
idea is that structural change and recognition of structural change need not
be synchronized. A system may evolve in chronological time, accumulate internal
stress, and nevertheless continue to be interpreted through an older stable
frame. Recognition occurs only when the accumulated transformation becomes
effective at the cognitive, institutional, or collective level.

Let $\tau$ denote chronological time and let $S_t$ denote the interpretive state
of a system at the $t$-th effective update. The index $t$ does not necessarily
coincide with chronological time. Rather, it marks the sequence of effective
interpretive transformations. A local update may satisfy
\begin{equation}
d(S_{t+1},S_t)<\theta \ ,
\label{eq:local-subthreshold}
\end{equation}
where $d$ is an informational distance and $\theta>0$ is a salience threshold.
Condition \eqref{eq:local-subthreshold} means that the system changes, but the
change remains locally absorbed by the existing interpretive frame. However,
after many locally sub-threshold updates, the accumulated distance from an
earlier reference state may satisfy
\begin{equation}
d(S_n,S_0)\geq \theta.
\label{eq:global-threshold}
\end{equation}
Thus, the system may be locally stable and globally transformed.

This distinction is the conceptual bridge from delayed recognition to delayed
differential dynamics. If the consequences of a variable are not immediately
recognized or institutionally effective, then the interaction terms of the model
should not be assumed to be instantaneous. Instead, the present rate of change
of one variable should depend on past values of other variables. This is the
standard mathematical intuition behind delay differential equations, where the
future trajectory of the system depends not only on its present state, but also
on its history \cite{menon2021delayinduced,saeedian2022delay,pigani2022delay}.

\subsection{Chronological time, recognition time, and effective interaction}

Let
\[
x(\tau)=\big(x_1(\tau),\ldots,x_n(\tau)\big)\in\mathbb{R}^n_+
\]
be the vector of structural variables. In an instantaneous dynamical model, the
effect of $x_j$ on $x_i$ at time $\tau$ is represented by $x_j(\tau)$. This
amounts to assuming that the state of $x_j$ becomes immediately effective in the
evolution equation for $x_i$. Such an assumption is often too strong for complex
social, institutional, ecological, or civilizational systems.

In a delayed model, the present variation of $x_i$ depends on the past value
\[
x_j(\tau-\delta_{ij}),
\]
where $\delta_{ij}\geq 0$ is the delay associated with the influence of variable
$j$ on variable $i$. If $\delta_{ij}=0$, the effect is instantaneous. If
$\delta_{ij}>0$, the effect becomes operative only after a chronological delay.

\begin{definition}[Interaction delay]
	Let $x_i$ and $x_j$ be two state variables. The parameter $\delta_{ij}\geq 0$ is
	called an interaction delay if the contribution of $x_j$ to the evolution of
	$x_i$ at time $\tau$ depends on the past value $x_j(\tau-\delta_{ij})$.
\end{definition}

In the present framework, the delay $\delta_{ij}$ may have several
interpretations. It may be a cognitive delay, when agents require time to
recognize the consequences of a change; an institutional delay, when
organizations require time to react; an informational delay, when signals
circulate slowly or are initially noisy; or an adaptive delay, when a system
cannot immediately modify its behavior after a structural perturbation. In all
cases, the delay measures the separation between structural variation and
effective response.

This is why delayed differential equations are mathematically natural for the
present problem. They formalize the fact that the dynamics of the system are
history-dependent. Recent work on delay-induced stability and instability has
emphasized that delays may qualitatively change the stability of equilibria,
because the corresponding characteristic equations become transcendental rather
than polynomial \cite{menon2021delayinduced}. In Lotka--Volterra-type systems,
delays may produce stability switches, oscillations, or Hopf bifurcations
\cite{xu2022delayedlv,lifan2024twodelay,li2022memorydiffusion}.


\subsection{From recognition delay to delayed interaction terms}

The difference between structural accumulation and recognition can be represented
by two time variables: the chronological time $\tau$ and the update index $t$ of the interpretative state of a system $S_t$. Let $\tau^\ast$ be the chronological time at which a structural
threshold is crossed, and let $\Delta>0$ be the additional recognition delay.
Then
\begin{equation}
\Omega(\tau^\ast)\geq \Theta
\quad\Longrightarrow\quad
\text{recognized instability at } \tau^\ast+\Delta.
\label{eq:recognition-delay}
\end{equation}
Here $\Omega$ is the  scalar instability functional \eq{eq:omega-functional} and $\Theta$ is the instability
threshold. Equation \eqref{eq:recognition-delay} says that instability may exist
before it is recognized.

The crucial point is that \eqref{eq:recognition-delay} does not necessarily
produce immediate recognition. The model therefore separates the structural
event
$
\Omega(\tau^\ast)\geq \Theta
$
from the interpretive update
$
S_t^{\mathrm{}}\longrightarrow S_{t+1}^{\mathrm{}},
$
which may occur only at $\tau^\ast+\Delta$.

\begin{figure}[ht]
	\centering
	\begin{tikzpicture}[
	node distance=1.2cm,
	every node/.style={font=\small},
	box/.style={
		rectangle,
		rounded corners,
		draw,
		align=center,
		minimum width=2.7cm,
		minimum height=0.9cm
	},
	arrow/.style={-{Latex[length=2mm]},thick}
	]
	\node[box] (structural) {Structural\\ accumulation};
	\node[box, right=of structural] (threshold) {Threshold crossing\\ $\Omega(\tau^\ast)\geq\Theta$};
	\node[box, right=of threshold] (delay) {Recognition delay\\ $\Delta>0$};
	\node[box, right=of delay] (update) {Interpretive update\\ $S_t\to S_{t+1}$};
	
	\draw[arrow] (structural) -- (threshold);
	\draw[arrow] (threshold) -- node[above] {$\tau^\ast$} (delay);
	\draw[arrow] (delay) -- node[above] {$\tau^\ast+\Delta$} (update);
	
	\end{tikzpicture}
	\caption{Delayed recognition as a separation between structural threshold crossing and interpretive update.Instability may be structurally present before it is collectively recognized.}
	\label{fig:delayed-recognition}
\end{figure}

Figure~\ref{fig:delayed-recognition} represents the conceptual structure of the
model. The crossing of the structural threshold occurs at $\tau^\ast$, while the
corresponding interpretive update occurs only after the delay $\Delta$. This
delay may be short in highly responsive systems and long in opaque or rigid
systems. The larger the recognition delay, the longer the system continues to
operate according to an obsolete interpretive frame.


This discussion motivates the introduction of interaction delays inside
the dynamical equations. The general delayed Lotka--Volterra 
Eqs. \eqref{eq:general-delayed-lv} should not be read only as an ecological
population model. In the present paper, it is used as a general nonlinear
interaction scheme. The multiplicative factor $x_i(\tau)$ ensures that the
growth rate of each variable is proportional to its current magnitude, while the
bracketed term represents intrinsic tendencies and delayed interactions with
the other variables. This structure is suitable for variables such as
complexity, pressure, resilience, and legitimacy because these quantities often
change through feedback effects rather than through independent linear
increments.

The passage from instantaneous to delayed dynamics can be expressed as
\begin{equation}
x_j(\tau)
\quad\longmapsto\quad
x_j(\tau-\delta_{ij}).
\label{eq:delay-substitution}
\end{equation}
This substitution is mathematically small but conceptually decisive. It means
that the present evolution of $x_i$ is shaped by a past configuration of the
system. Therefore, the current state is not sufficient to determine the future
trajectory unless the relevant history is also specified.

Accordingly, the initial condition for \eqref{eq:general-delayed-lv} is not a
single vector $x(0)$, but a history function
\begin{equation}
x_i(\tau)=\varphi_i(\tau),
\qquad
\tau\in[-\delta_{\max},0],
\qquad
i=1,\ldots,n,
\label{eq:history-function}
\end{equation}
where
\[
\delta_{\max}=\max_{i,j}\delta_{ij}.
\]
This historical dependence is coherent with the interpretation of delayed
recognition: the present dynamics of the system depend on past states that have
only now become effective.

\subsection{The delayed civilizational interaction scheme}

For the five-variable model, the delayed interaction structure can be written as
\begin{equation}
\begin{cases}
\dot C(\tau)
=
C(\tau)
\left[
r_C
+
a_{CH}H(\tau-\delta_{CH})
-
a_{CR}R(\tau-\delta_{CR})
+
a_{CP}P(\tau-\delta_{CP})
\right],
\\[0.25cm]
\dot H(\tau)
=
H(\tau)
\left[
r_H
+
a_{HC}C(\tau-\delta_{HC})
-
a_{HL}L(\tau-\delta_{HL})
-
a_{HR}R(\tau-\delta_{HR})
\right],
\\[0.25cm]
\dot P(\tau)
=
P(\tau)
\left[
r_P
+
a_{PC}C(\tau-\delta_{PC})
+
a_{PH}H(\tau-\delta_{PH})
-
a_{PR}R(\tau-\delta_{PR})
\right],
\\[0.25cm]
\dot R(\tau)
=
R(\tau)
\left[
r_R
-
a_{RC}C(\tau-\delta_{RC})
-
a_{RH}H(\tau-\delta_{RH})
-
a_{RP}P(\tau-\delta_{RP})
+
a_{RL}L(\tau-\delta_{RL})
\right],
\\[0.25cm]
\dot L(\tau)
=
L(\tau)
\left[
r_L
-
a_{LC}C(\tau-\delta_{LC})
-
a_{LH}H(\tau-\delta_{LH})
-
a_{LP}P(\tau-\delta_{LP})
+
a_{LR}R(\tau-\delta_{LR})
\right].
\end{cases}
\label{eq:five-variable-delayed-system}
\end{equation}

All coefficients $a_{\cdot\cdot}$  are assumed to be non-negative. The signs in \eqref{eq:five-variable-delayed-system} encode the qualitative
assumptions of the model. Complexity, entropy, and pressure tend to amplify
instability, while resilience and legitimacy tend to dampen it. The delays
specify when these effects become operative. For example, $\delta_{HC}$ is the
delay with which complexity increases informational entropy; $\delta_{PH}$ is
the delay with which informational entropy increases systemic pressure;
$\delta_{LP}$ is the delay with which pressure erodes legitimacy; and
$\delta_{RP}$ is the delay with which pressure reduces resilience.  Thus, for example, $a_{HC}C(\tau-\delta_{HC})$ means that complexity increases informational entropy after a delay $\delta_{HC}$, while $-a_{HL}L(\tau-\delta_{HL})$ means that legitimacy reduces entropy after a delay $\delta_{HL}$.

\begin{figure}[ht]
	\centering
	\begin{tikzpicture}[
	node distance=1.8cm,
	every node/.style={font=\small},
	var/.style={
		circle,
		draw,
		align=center,
		minimum size=1.15cm
	},
	func/.style={
		rectangle,
		rounded corners,
		draw,
		align=center,
		minimum width=2.2cm,
		minimum height=0.9cm
	},
	arrow/.style={-{Latex[length=2mm]},thick},
	negarrow/.style={-{Latex[length=2mm]},thick,dashed}
	]
	
	\node[var] (C) {$C$};
	\node[var, right=2.0cm of C] (H) {$H$};
	\node[var, right=2.0cm of H] (P) {$P$};
	
	\node[var, below=1.8cm of H] (R) {$R$};
	\node[var, below=1.8cm of P] (L) {$L$};
	
	\node[func, right=2.4cm of P] (Omega) {$\Omega(\tau)$};
	\node[func, right=2.2cm of Omega] (Theta) {$\Theta$};
	\node[func, below=1.4cm of Theta] (Update) {$S_t\to S_{t+1}$};
	
	\draw[arrow] (C) -- node[above] {$+\delta_{HC}$} (H);
	\draw[arrow] (H) -- node[above] {$+\delta_{PH}$} (P);
	\draw[negarrow] (P) -- node[right] {$-\delta_{RP}$} (R);
	\draw[negarrow] (P) -- node[right] {$-\delta_{LP}$} (L);
	\draw[arrow] (R) -- node[below left] {$+\delta_{RL}$} (L);
	
	\draw[arrow] (C) to[bend left=10] node[above] {$+\alpha$} (Omega);
	\draw[arrow] (H) to[bend left=5] node[above] {$+\beta$} (Omega);
	\draw[arrow] (P) -- node[above] {$+\gamma$} (Omega);
	\draw[negarrow] (R) to[bend right=10] node[below] {$-\eta$} (Omega);
	\draw[negarrow] (L) to[bend right=10] node[below] {$-\mu$} (Omega);
	
	\draw[arrow] (Omega) -- node[above] {$\Omega\geq\Theta$} (Theta);
	\draw[arrow] (Theta) -- node[right] {$+\Delta$} (Update);
	
	\end{tikzpicture}
	\caption{Delayed interaction genealogy for the five-variable instability model. Solid arrows indicate amplifying effects, while dashed arrows indicate stabilizing or inhibiting effects.}
	\label{fig:delayed-genealogy}
\end{figure}

Figure~\ref{fig:delayed-genealogy} summarizes the interaction genealogy. The
diagram should be read dynamically: the arrows do not represent instantaneous
causal links, but delayed effective influences. This is precisely what makes the
model compatible with delayed recognition. The system may continue to accumulate
complexity, entropy, and pressure while resilience and legitimacy are eroded
only after a lag. As a consequence, the functional $\Omega(\tau)$ may approach
or cross the threshold before the system has adapted to the new structural
condition.

\subsection{Mathematical consequence: delay as a destabilizing parameter}

The introduction of delays changes the stability problem. In an instantaneous
system, linearization around an equilibrium generally leads to a polynomial
characteristic equation. In a delayed system, the characteristic equation
contains exponential terms of the form $e^{-\lambda\delta_{ij}}$. This makes the
location of characteristic roots more delicate and allows stability switches as
the delays vary \cite{menon2021delayinduced,lifan2024twodelay}.

In the simplest common-delay case, suppose that all delays are equal:
\[
\delta_{ij}=\delta.
\]
If $x^\ast$ is a positive equilibrium and
\[
B=\operatorname{diag}(x^\ast)A,
\]
then the linearized system has the form
\begin{equation}
\dot u(\tau)=B u(\tau-\delta).
\label{eq:common-delay-linearization}
\end{equation}
Seeking exponential solutions $u(\tau)=ve^{\lambda \tau}$ gives
\begin{equation}
\det\left(\lambda I-Be^{-\lambda\delta}\right)=0.
\label{eq:common-delay-characteristic}
\end{equation}
Equation \eqref{eq:common-delay-characteristic} is transcendental. Therefore,
the stability of the equilibrium cannot be inferred only from the eigenvalues
of $B$ in the same way as in ordinary differential equations. A delay that is
small may preserve stability, while a delay above a critical value may generate
oscillations or instability.

This observation is essential for the present model. Recognition delay is not a
neutral postponement. If the system reacts too late to structural accumulation,
the delay itself becomes part of the instability mechanism. In this sense,
delayed recognition is not merely an epistemic limitation; it is a dynamical
parameter capable of changing the qualitative behavior of the system. The transition from delayed recognition to delayed dynamics can therefore be
summarized as follows:
$
\text{local sub-threshold accumulation}
\quad\Rightarrow\quad
\text{delayed effective interaction}
\quad\Longrightarrow\quad
\text{threshold crossing}
\quad\Longrightarrow\quad
\text{possible delay-induced instability}.
$
The next sections develop this structure by studying positivity, equilibria,
linearization, and local stability of the delayed generalized Lotka--Volterra
system.


\section{A Threshold Functional for Structural Instability}
\label{sec:threshold-functional}

The delayed generalized Lotka--Volterra system \eq{eq:five-variable-delayed-system}  describes the evolution of interacting structural variables. However, the mere evolution of these variables is not yet sufficient to determine whether the system is stable, fragile, or unstable. A further scalar quantity   is needed in order to summarize the balance between destabilizing and stabilizing forces. For this reason, we introduce with \eq{eq:omega-functional} the  threshold functional 
$
\Omega(\tau),
$
which measures the instantaneous structural stress of the system at chronological time $\tau$.

The role of $\Omega$ is analogous to the role of an order parameter in dynamical systems and critical-transition theory. It compresses several interacting dimensions into a single instability index. Such a reduction is not meant to eliminate the multidimensional nature of the system; rather, it provides a mathematically tractable criterion for distinguishing a stable regime from a threshold-crossing regime. Similar scalar indicators are frequently used in the study of resilience, early-warning signals, and critical transitions, where complex systems may undergo abrupt qualitative shifts after gradual changes in underlying variables \citep{lenton2019climate,dakos2024tipping,george2023early,xu2023nonequilibrium}.

Let
$
x(\tau)
$ as given in \eq{eq:five-variable-vector},
denote the civilizational state vector. 
The first three variables $C(\tau) , H(\tau) , P(\tau)  $ are destabilizing: increasing complexity, entropy, and pressure tend to increase the fragility of the system. The last two variables $R(\tau), L(\tau)$ are stabilizing: increasing resilience and legitimacy tend to reduce instability. This motivates the following definition.

\begin{definition}[Structural instability functional]
	\label{def:omega}
	Let
	$
	x(\tau)
	$ be the state vector  \eq{eq:five-variable-vector}
	The structural instability functional $\Omega(\tau)$ is defined by
 \eq{eq:omega-functional}.
	The coefficients $\alpha,\beta,\gamma,\eta,\mu$ measure the relative contribution of each variable to the instability balance.
\end{definition}

The functional \eqref{eq:omega-functional} is intentionally linear at this stage. This choice is not restrictive as a first approximation: it allows the threshold mechanism to be separated from the nonlinear interaction structure of the delayed Lotka--Volterra equations. Nonlinear or state-dependent versions of $\Omega$ could  be also introduced, for instance by including interaction terms such as $C(\tau)H(\tau)$, $P(\tau)/R(\tau)$, or entropy--legitimacy coupling. 

\subsection{Stable, critical, and unstable regimes}

Let
$
\Theta>0
$
be a structural instability threshold. The system is said to be structurally stable when
\begin{equation}
\label{eq:stable-regime}
\Omega(\tau)<\Theta.
\end{equation}
It enters the threshold regime when
\begin{equation}
\label{eq:threshold-regime}
\Omega(\tau)=\Theta,
\end{equation}
and it becomes structurally unstable when
\begin{equation}
\label{eq:unstable-regime}
\Omega(\tau)>\Theta.
\end{equation}

The threshold $\Theta$ should not be interpreted as a universal constant. It depends on the scale of the system, the units used to measure the variables, the normalization adopted, and the institutional or social capacity of the system to absorb perturbations. In the theory of critical transitions, thresholds often separate basins of attraction or mark the loss of resilience of a stable regime \citep{scheffer2009early,dakos2024tipping,rietkerk2025ambiguity}. In the present model, $\Theta$ plays an analogous role: it separates a regime in which structural stress is absorbed from a regime in which structural stress becomes destabilizing.

\begin{definition}[Structural threshold crossing]
	\label{def:threshold-crossing}
	A structural threshold crossing occurs at time $\tau^\ast$ if
	\begin{equation}
	\label{eq:crossing}
	\Omega(\tau^\ast)\geq \Theta
	\end{equation}
	and there exists $\varepsilon>0$ such that
	\begin{equation}
	\label{eq:left-stable}
	\Omega(\tau)<\Theta
	\qquad
	\text{for all } \tau\in(\tau^\ast-\varepsilon,\tau^\ast).
	\end{equation}
	The time $\tau^\ast$ is called the structural crossing time.
\end{definition}

Definition \ref{def:threshold-crossing} distinguishes the moment at which the system objectively crosses the instability threshold from the moment at which the crossing is recognized. This distinction is essential in delayed systems. A system may cross its structural threshold at time $\tau^\ast$, while agents, institutions, or observers recognize the transition only at a later time
\[
\tau^\ast+\Delta,
\]
where $\Delta>0$ is an interpretive or institutional recognition delay.

Thus, the model separates two events:
\[
\Omega(\tau^\ast)\geq \Theta
\qquad
\text{structural instability},
\]
and
\[
\text{recognition at } \tau^\ast+\Delta
\qquad
\text{recognized instability}.
\]
This is the mathematical core of delayed recognition (see Fig. \ref{fig:omega-functional}). Instability may exist before it is perceived.

\begin{figure}[htbp]
	\centering
	\begin{tikzpicture}[
	node distance=1.9cm,
	box/.style={draw, rounded corners, thick, align=center, minimum width=2.3cm, minimum height=0.9cm},
	smallbox/.style={draw, rounded corners, thick, align=center, minimum width=1.7cm, minimum height=0.75cm},
	arrow/.style={-{Latex[length=2.3mm]}, thick},
	dashedarrow/.style={-{Latex[length=2.3mm]}, thick, dashed}
	]
	
	\node[box] (C) {$C(\tau)$\\Complexity};
	\node[box, right=of C] (H) {$H(\tau)$\\Entropy};
	\node[box, right=of H] (P) {$P(\tau)$\\Pressure};
	
	\node[box, below=1.35cm of H] (Omega) {$\Omega(\tau)$\\Instability\\functional};
	
	\node[box, below left=1.15cm and 0.45cm of Omega] (R) {$R(\tau)$\\Resilience};
	\node[box, below right=1.15cm and 0.45cm of Omega] (L) {$L(\tau)$\\Legitimacy};
	
	\node[smallbox, right=2.3cm of Omega] (Theta) {$\Theta$\\Threshold};
	\node[smallbox, right=1.65cm of Theta] (Update) {Recognized\\transition};
	
	\draw[arrow] (C) -- node[above] {$+\alpha$} (Omega);
	\draw[arrow] (H) -- node[right] {$+\beta$} (Omega);
	\draw[arrow] (P) -- node[above] {$+\gamma$} (Omega);
	\draw[arrow] (R) -- node[left] {$-\eta$} (Omega);
	\draw[arrow] (L) -- node[right] {$-\mu$} (Omega);
	
	\draw[arrow] (Omega) -- node[above] {$\Omega\geq\Theta$} (Theta);
	\draw[dashedarrow] (Theta) -- node[above] {$+\Delta$} (Update);
	
	\end{tikzpicture}
	\caption{Construction of the instability functional. Complexity, entropy, and pressure increase structural instability, while resilience and legitimacy reduce it. Threshold crossing may be recognized only after an additional delay $\Delta$.}
	\label{fig:omega-functional}
\end{figure} 

\subsection{Geometric interpretation}

The threshold functional defines a half-space decomposition of the positive state space. Let
\[
\mathbb{R}_{+}^{5}
=
\left\{
(C,H,P,R,L): C,H,P,R,L\geq 0
\right\}.
\]
The stable region is
\begin{equation}
\label{eq:stable-set}
\mathcal{S}_{\Theta}
=
\left\{
x\in\mathbb{R}_{+}^{5}:
\Omega(x)<\Theta
\right\}.
\end{equation}
The threshold surface is
\begin{equation}
\label{eq:threshold-surface}
\partial\mathcal{S}_{\Theta}
=
\left\{
x\in\mathbb{R}_{+}^{5}:
\Omega(x)=\Theta
\right\},
\end{equation}
and the unstable region is
\begin{equation}
\label{eq:unstable-set}
\mathcal{U}_{\Theta}
=
\left\{
x\in\mathbb{R}_{+}^{5}:
\Omega(x)>\Theta
\right\}.
\end{equation}

Although the full state space is five-dimensional, the mechanism can be represented schematically by projecting the dynamics onto the scalar instability functional $\Omega(\tau)$.

\begin{figure}[ht]
	\centering
	\begin{tikzpicture}[
	scale=1.05,
	axis/.style={->,thick},
	curve/.style={thick},
	thresh/.style={thick,dashed},
	arr/.style={-{Latex[length=2mm]},thick}
	]
	
	
	\fill[gray!12] (0.01,0.01) rectangle (8,2.5);
	\node[gray!60!black] at (2.5,1.0) {stable region};
	
	\draw[thresh] (0,2.5) -- (8,2.5) node[right] {$\Theta_{\mathrm{}}$};
	
	\draw[curve] plot[smooth] coordinates {
		(0,0.8) (1,1.0) (2,1.25) (3,1.55) (4,2.0) (5,2.55) (6,3.0) (7,3.35)
	};

	\draw[->, thick] (0,0) -- (8.2,0) node[right] {$\tau$};
	\draw[->, thick] (0,0) -- (0,4.2) node[above] {$\Omega(\tau)$};

	\draw[dotted,thick] (5,0) -- (5,2.55);
	\draw[dotted,thick] (6.4,0) -- (6.4,3.2);
	
	\node[below] at (5,0.) {$\tau^\ast$};
	\node[below] at (6.4,0) {$\tau^\ast+\Delta$};
	
	\node[align=center] at (3.2,3.2) {latent accumulation};
	\node[align=center] at (6.3,3.85) {recognized\\instability};
	
	\draw[arr] (5,2.65) -- (6.4,3.15) node[midway,above] {$\Delta$};
	\end{tikzpicture}
	\caption{Structural threshold crossing and delayed recognition. The functional $\Omega(\tau)$ crosses the threshold at $\tau^\ast$, while recognition occurs at $\tau^\ast+\Delta$.}
	\label{fig:threshold-delay}
\end{figure}

Figure \ref{fig:threshold-delay} illustrates the difference between structural crossing and recognized crossing. The scalar functional $\Omega(\tau)$ may exceed $\Theta$ before the system changes its interpretive or institutional response. This separation is consistent with the broader literature on tipping points and early-warning signals, where a system may lose resilience before a transition becomes visible in macroscopic behavior \citep{dakos2024tipping,bury2021deep,rietkerk2025ambiguity}.

\subsection{Delayed threshold recognition}

The delayed nature of the model can now be incorporated into the threshold functional. Since the state variables interact through delays, the perceived or institutionally effective instability need not depend on the current state $x(\tau)$ alone. Instead, one may define a recognized instability functional
\begin{equation}
\label{eq:omega-recognized}
\Omega_{\Delta}(\tau)
=
\Omega(\tau-\Delta),
\end{equation}
where $\Delta\geq 0$ is a recognition delay.

More generally, each component may be recognized with its own delay:
\begin{equation}
\label{eq:omega-component-delays}
\Omega_{\delta}(\tau)
=
\alpha C(\tau-\delta_C)
+
\beta H(\tau-\delta_H)
+
\gamma P(\tau-\delta_P)
-
\eta R(\tau-\delta_R)
-
\mu L(\tau-\delta_L).
\end{equation}
Here $\delta_C,\delta_H,\delta_P,\delta_R,\delta_L\geq 0$ represent component-specific recognition delays.

This distinction is important. The system may be structurally unstable according to $\Omega(\tau)$, while still being interpreted as stable according to $\Omega_{\delta}(\tau)$:
\[
\Omega(\tau)\geq \Theta
\quad
\text{but}
\quad
\Omega_{\delta}(\tau)<\Theta.
\]
In this case, the system has already crossed the structural threshold, but the delayed interpretive mechanism has not yet registered the crossing.

\begin{definition}[Latent instability]
	\label{def:latent-instability}
	The system is in a state of latent instability at time $\tau$ if
	$
	\Omega(\tau)\geq \Theta
	$
	while
$
\Omega_{\delta}(\tau)<\Theta.
$
\end{definition}

Latent instability is the formal counterpart of delayed recognition. It is the region in which the structural state of the system and the recognized state of the system disagree.


The threshold functional does not replace the delayed generalized Lotka--Volterra dynamics. Rather, it is defined on top of them. The state variables evolve according to  \eq{eq:general-delayed-lv}.
The functional $\Omega$ then maps the evolving state into a scalar instability index:
\[
x(\tau)
\longmapsto
\Omega(\tau).
\]
Thus, the complete structure of the model is:
\[
\text{delayed state dynamics} 
\Rightarrow 
\text{instability functional} 
\Rightarrow
\text{threshold crossing} 
\Rightarrow
\text{recognized transition}.
\]
The advantage of this construction is that it separates two questions. The delayed Lotka--Volterra system answers the dynamical question:
 How do the structural variables evolve? 
 The threshold functional answers the diagnostic question:
 When does their combined configuration become unstable?



\section{Equilibria and Positivity of Solutions}
\label{sec:equilibria-positivity}

We now study the elementary dynamical properties of the delayed generalized
Lotka--Volterra system introduced above. The aim of this section is not yet to
provide a complete stability theory, but to establish the basic phase-space
structure of the model: existence of equilibria, invariance of the non-negative
orthant, positivity of solutions, and the algebraic characterization of boundary
and interior equilibria.

We now study positivity, equilibria, and boundedness of system~\eqref{eq:five-variable-delayed-system}. This formulation is consistent with recent work on
delayed Lotka--Volterra and generalized Lotka--Volterra systems, where delays
are known to affect feasibility, permanence, boundedness, and local stability
\cite{saeedian2022delay,pigani2022delay,xu2022delayedlv,JiangHalikMuhammadhaji2023NSpeciesDelays}.

Let
\[
\delta_{\max}=\max_{1\leq i,j\leq n}\delta_{ij}.
\]
The natural phase space of \eqref{eq:five-variable-delayed-system} is
\[
\mathcal{C}:=
C\big([-\delta_{\max},0],\mathbb{R}^{n}\big),
\]
equipped with the supremum norm
\[
\|\varphi\|_{\infty}
=
\sup_{s\in[-\delta_{\max},0]}\|\varphi(s)\|.
\]
For an initial history \(\varphi\in\mathcal{C}\), the solution satisfies
\[
x_i(s)=\varphi_i(s),
\qquad s\in[-\delta_{\max},0],
\qquad i=1,\ldots,n.
\]
We denote by
\[
\mathcal{C}_{+}:=
C\big([-\delta_{\max},0],\mathbb{R}^{n}_{+}\big)
\]
the cone of non-negative histories, and by
\[
\mathcal{C}_{++}:=
C\big([-\delta_{\max},0],\mathbb{R}^{n}_{++}\big)
\]
the cone of strictly positive histories.

\subsection{Local existence and uniqueness}

We first record the standard local well-posedness statement. Although this result
is classical in the theory of functional differential equations
\cite{hale1993introduction,Kuang1993}, it is useful to state it explicitly
because all subsequent arguments require the solution to be defined on an
interval of chronological time.

\begin{theorem}[Local existence and uniqueness]
	\label{thm:local-existence}
	For every initial history \(\varphi\in\mathcal{C}\), there exists
	\(T_{\max}>0\) and a unique solution
	\[
	x:[-\delta_{\max},T_{\max})\to\mathbb{R}^{n}
	\]
	of \eqref{eq:general-delayed-lv} satisfying    
	\[
	x(s)=\varphi(s),
	\qquad s\in[-\delta_{\max},0].
	\]
	Moreover, the solution depends continuously on the initial history.
\end{theorem}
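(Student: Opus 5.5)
The plan is to recast \eqref{eq:general-delayed-lv} as an abstract retarded functional differential equation on the phase space $\mathcal{C}$ and then invoke the classical existence--uniqueness theorem for such equations \cite{hale1993introduction,Kuang1993}. For $\phi\in\mathcal{C}$, I define $F=(F_1,\ldots,F_n):\mathcal{C}\to\mathbb{R}^n$ by
\[
F_i(\phi)=\phi_i(0)\Big(r_i+\sum_{j=1}^{n}a_{ij}\,\phi_j(-\delta_{ij})\Big),
\]
and use the usual segment notation $x_\tau(s)=x(\tau+s)$ for $s\in[-\delta_{\max},0]$. With these definitions, \eqref{eq:general-delayed-lv} is exactly $\dot x(\tau)=F(x_\tau)$. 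The system \eqref{eq:five-variable-delayed-system} is the special case $n=5$ with sign-adjusted coefficients, so it needs no separate treatment.

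The main step is to show that $F$ is continuous and locally Lipschitz on $\mathcal{C}$. Each evaluation map $\phi\mapsto\phi_j(-\delta_{ij})$ is a bounded linear functional with norm at most $1$ with respect to $\|\cdot\|_\infty$. Hence $F_i$ is a constant times a linear functional, plus sums of products of two such functionals, and is therefore a continuous quadratic form on $\mathcal{C}$. For $\|\phi\|_\infty,\|\psi\|_\infty\le M$, I would write
\[
\phi_i(0)\phi_j(-\delta_{ij})-\psi_i(0)\psi_j(-\delta_{ij})=(\phi_i(0)-\psi_i(0))\phi_j(-\delta_{ij})+\psi_i(0)(\phi_j(-\delta_{ij})-\psi_j(-\delta_{ij})),
\]
which gives the estimate
\[
|F_i(\phi)-F_i(\psi)|\le\Big(|r_i|+2M\sum_j|a_{ij}|\Big)\|\phi-\psi\|_\infty.
\]
So $F$ is Lipschitz on bounded sets, and in fact $C^1$ (indeed $C^\infty$) in the Fréchet sense.

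From here the classical theorem (Hale--Verduyn Lunel, Thm.~2.2.1 and 2.2.3) gives local existence and uniqueness of a solution through any $\varphi\in\mathcal{C}$. To keep the argument self-contained, I would sketch the underlying contraction. Fix $\varphi$ and a small $h>0$, and work on the closed set of continuous $y:[-\delta_{\max},h]\to\mathbb{R}^n$ that equal $\varphi$ on $[-\delta_{\max},0]$ and satisfy $\|y_\tau-\varphi\|_\infty\le 1$. On this set the operator $(Ty)(\tau)=\varphi(0)+\int_0^\tau F(y_s)\,ds$ maps the set into itself and is a contraction once $h$ is small relative to the local Lipschitz constant and $\sup|F|$. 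Its fixed point is the solution.

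Extending by continuation to a maximal interval $[-\delta_{\max},T_{\max})$ uses the standard Zorn/union argument. Uniqueness on the maximal interval follows from local uniqueness via a Gronwall estimate on $\|x_\tau-y_\tau\|_\infty$. Continuous dependence follows by the same Gronwall argument applied to two histories $\varphi,\psi$: on any compact $[0,T]\subset[0,T_{\max}(\varphi))$ the solutions stay in a bounded set, which yields $\|x_\tau^\varphi-x_\tau^\psi\|_\infty\le e^{KT}\|\varphi-\psi\|_\infty$.

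I expect the only delicate point to be this continuous-dependence statement near the maximal time. One has to ensure that $T_{\max}(\psi)>T$ for $\psi$ close to $\varphi$. I would handle it with the standard trick: while the solution through $\psi$ stays in a ball of radius $\sup_{[0,T]}\|x^\varphi_\tau\|_\infty+1$, the Gronwall bound holds. For $\|\psi-\varphi\|_\infty<e^{-KT}$, that bound keeps the solution through $\psi$ strictly inside the ball, so it cannot blow up before $T$. Everything else is routine, since the nonlinearity is polynomial in point evaluations of the history.
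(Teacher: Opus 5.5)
Your proposal is correct and follows the same route as the paper: you define the same functional $F$ on $\mathcal{C}$, verify that it is locally Lipschitz, and invoke the classical existence--uniqueness theorem for retarded functional differential equations. The explicit Lipschitz constant, the contraction sketch, and the Gronwall argument for continuous dependence (including the check that $T_{\max}(\psi)>T$ for nearby histories) fill in details that the paper leaves to the standard theory.
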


\begin{proof}
	Define the functional vector field \(F:\mathcal{C}\to\mathbb{R}^{n}\) by
	\[
	F_i(\phi)
	=
	\phi_i(0)
	\left(
	r_i+\sum_{j=1}^{n}a_{ij}\phi_j(-\delta_{ij})
	\right),
	\qquad i=1,\ldots,n.
	\]
	The map \(F\) is locally Lipschitz on \(\mathcal{C}\), since it is polynomial in
	finitely many evaluations of \(\phi\). The standard existence and uniqueness
	theorem for retarded functional differential equations therefore gives a unique
	local solution. Continuous dependence follows from the same theorem.
\end{proof}

\subsection{Invariance of the non-negative orthant}

The first structural property of \eqref{eq:general-delayed-lv} is that the
non-negative cone is positively invariant. This is essential in the present
model, since the variables represent non-negative quantities such as complexity,
pressure, resilience, and legitimacy.

\begin{theorem}[Positive invariance of \(\mathbb{R}^{n}_{+}\)]
	\label{thm:positive-invariance}
	Assume that the initial history satisfies
	\[
	\varphi\in\mathcal{C}_{+}.
	\]
	Then the corresponding solution of \eqref{eq:general-delayed-lv} satisfies
	\[
	x(\tau)\in\mathbb{R}^{n}_{+}
	\]
	for every \(\tau\in[0,T_{\max})\). Hence \(\mathbb{R}^{n}_{+}\) is positively
	invariant.
\end{theorem}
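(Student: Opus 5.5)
The plan is to use the multiplicative structure of \eqref{eq:general-delayed-lv}. Each equation can be read as a \emph{linear} scalar equation for $x_i$ whose coefficient is determined by the already known solution. Fix a solution $x$ on $[-\delta_{\max},T_{\max})$ given by Theorem~\ref{thm:local-existence}, and for each $i$ define
\[
g_i(\tau)=r_i+\sum_{j=1}^{n}a_{ij}x_j(\tau-\delta_{ij}),\qquad \tau\in[0,T_{\max}).
\]
Since $x$ is continuous on $[-\delta_{\max},T_{\max})$ (it equals the continuous history $\varphi$ on $[-\delta_{\max},0]$ and is a $C^1$ solution afterwards), each $g_i$ is continuous on $[0,T_{\max})$. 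Hence $g_i$ is integrable on every compact subinterval $[0,\tau]$ with $\tau<T_{\max}$.

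The solution then satisfies $\dot x_i(\tau)=g_i(\tau)\,x_i(\tau)$, a scalar linear ODE with continuous coefficient. By uniqueness for this scalar equation (or by differentiating $x_i(\tau)\exp(-\int_0^\tau g_i)$ and checking that the derivative vanishes), we get the explicit representation
\[
x_i(\tau)=\varphi_i(0)\exp\!\left(\int_0^{\tau}g_i(s)\,ds\right),\qquad \tau\in[0,T_{\max}).
\]
Since $\varphi\in\mathcal{C}_+$, we have $\varphi_i(0)\ge 0$, and the exponential factor is strictly positive. Therefore $x_i(\tau)\ge 0$ for all $\tau\in[0,T_{\max})$ and all $i$, which is the claim. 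As a by-product, $x_i(\tau)>0$ whenever $\varphi_i(0)>0$, and $x_i\equiv 0$ on $[0,T_{\max})$ whenever $\varphi_i(0)=0$. So each face of the orthant, and the interior, is invariant as well. I would record this because it is useful later for boundary and interior equilibria.

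The only subtle point is the first step: justifying that the delayed coefficient $g_i$ may be treated as a given function. This is legitimate because $g_i$ is built from the already-constructed solution, so no circularity arises. The argument needs only continuity of $g_i$, not any sign information on it. In particular the signs of $r_i$ and $a_{ij}$, and positivity of the other components, play no role. This is important, because the sign pattern in \eqref{eq:five-variable-delayed-system} is mixed, and a comparison argument based on signs would be awkward.

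If one prefers to avoid the explicit formula, an alternative is a contradiction argument. Let $\tau_0=\inf\{\tau: x_i(\tau)<0 \text{ for some } i\}$. On $[0,\tau_0+\epsilon]$ the function $|g_i|$ is bounded by some $M$. Grönwall's inequality applied to $\dot x_i=g_ix_i$ with $x_i(\tau_0)=0$ then forces $x_i\equiv 0$ near $\tau_0$, a contradiction. I expect the explicit exponential representation to be the cleaner route, so I would use it.
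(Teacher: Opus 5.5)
Your argument is correct, but it is not the route the paper takes for this theorem.

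The paper uses a first-hitting-time argument. If $x_i$ first reaches $0$ at some $\tau_0>0$, then $\dot x_i(\tau_0)=x_i(\tau_0)g_i(\tau_0)=0$, so the vector field is tangent to $\{x_i=0\}$ and the solution ``cannot cross.'' You instead apply the integrating factor $\exp\bigl(-\int_0^\tau g_i(s)\,ds\bigr)$ and obtain $x_i(\tau)=\varphi_i(0)\exp\bigl(\int_0^\tau g_i(s)\,ds\bigr)$ on all of $[0,T_{\max})$. The paper uses this formula only in the subsequent strict-positivity theorem, and there it derives it by dividing by $x_i$ ``whenever $x_i(\tau)>0$.''

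Your derivation gains several things:
\begin{itemize}
\item It needs no a priori sign information on $x_i$, so nothing is divided by a possibly vanishing quantity.
\item It covers histories with $\varphi_i(0)=0$, which the hitting-time argument at $\tau_0>0$ does not address.
\item It delivers nonnegativity, invariance of each face, and strict positivity at once. Strict positivity then holds under the weaker hypothesis $\varphi_i(0)>0$, rather than $\varphi\in\mathcal{C}_{++}$.
\end{itemize}

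It also supplies the step the paper's tangency argument leaves implicit. The condition $\dot x_i(\tau_0)=0$ by itself does not rule out a sign change; consider $-(\tau-\tau_0)^3$. What rules it out is that $x_i$ solves the scalar linear equation $\dot y=g_i y$ with continuous coefficient. The only solution of that equation with $y(\tau_0)=0$ is $y\equiv 0$, which is exactly the Gr\"onwall step in your alternative argument.

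The paper's version is shorter and more geometric, but it is complete only once that uniqueness step is added.
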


\begin{proof}
	For each component \(i\), write
	\[
	\dot{x}_i(\tau)=x_i(\tau)g_i(\tau),
	\]
	where
	\[
	g_i(\tau)
	=
	r_i+\sum_{j=1}^{n}a_{ij}x_j(\tau-\delta_{ij}).
	\]
	Suppose that \(x_i\) reaches zero for the first time at \(\tau_0>0\). Then
	\(x_i(\tau)>0\) for \(\tau<\tau_0\) sufficiently close to \(\tau_0\), and
	\(x_i(\tau_0)=0\). At \(\tau_0\), the differential equation gives
	\[
	\dot{x}_i(\tau_0)=x_i(\tau_0)g_i(\tau_0)=0.
	\]
	Thus the vector field is tangent to the boundary hyperplane \(x_i=0\), and the
	solution cannot cross from \(x_i\geq 0\) into \(x_i<0\). Since this argument holds
	for every component, the non-negative orthant is positively invariant.
\end{proof}

\begin{theorem}[Strict positivity]
	\label{thm:strict-positivity}
	Assume that the initial history satisfies
	\[
	\varphi\in\mathcal{C}_{++}.
	\]
	Then
	\[
	x_i(\tau)>0,
	\qquad i=1,\ldots,n,
	\]
	for every \(\tau\in[0,T_{\max})\).
\end{theorem}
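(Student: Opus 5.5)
The plan is to integrate the multiplicative structure of \eqref{eq:general-delayed-lv} explicitly, which yields an exponential representation of each component. Fix a component \(i\) and any \(T<T_{\max}\). By Theorem~\ref{thm:local-existence} the solution \(x\) is continuous on \([-\delta_{\max},T]\). Hence the coefficient
\[
g_i(\tau)=r_i+\sum_{j=1}^{n}a_{ij}x_j(\tau-\delta_{ij})
\]
is a continuous function of \(\tau\) on \([0,T]\), because each delayed argument \(\tau-\delta_{ij}\) lies in \([-\delta_{\max},T]\). The equation \(\dot x_i=x_i g_i\) can then be read as a scalar \emph{linear} ODE for \(x_i\), with the known continuous coefficient \(g_i\).

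The key step is to verify the formula
\[
x_i(\tau)=\varphi_i(0)\exp\!\left(\int_0^{\tau} g_i(s)\,ds\right),\qquad \tau\in[0,T].
\]
To do this, I define \(y(\tau)=x_i(\tau)\exp\!\big(-\int_0^\tau g_i(s)\,ds\big)\). Differentiating gives \(\dot y=(\dot x_i-g_i x_i)\exp(\cdots)=0\), so \(y\) is constant and equal to \(y(0)=\varphi_i(0)\). This argument uses no sign information on \(x_j\), so Theorem~\ref{thm:positive-invariance} is not even needed. It also avoids the delicate first-hitting-time argument, which is the only potential obstacle here: the tangency reasoning used for Theorem~\ref{thm:positive-invariance} rules out crossing but not touching zero.

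Since \(\varphi\in\mathcal{C}_{++}\), we have \(\varphi_i(0)>0\). The exponential is strictly positive and finite because \(g_i\) is bounded on the compact interval \([0,T]\). Therefore \(x_i(\tau)>0\) on \([0,T]\). As \(T<T_{\max}\) and \(i\) are arbitrary, the conclusion holds for every \(\tau\in[0,T_{\max})\) and every \(i=1,\ldots,n\).

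The representation only requires \(\varphi_i(0)>0\), not strict positivity of the whole history. I would note this as a remark: the strict positivity of the entire history in \(\mathcal{C}_{++}\) is only used to guarantee strictly positive delayed inputs on \([0,\delta_{\max}]\) when interpreting the model, and it plays no role in the proof.
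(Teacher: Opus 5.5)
Your proof is correct and rests on the same exponential representation \(x_i(\tau)=\varphi_i(0)\exp\bigl(\int_0^\tau g_i(s)\,ds\bigr)\) that the paper uses. Your integrating-factor derivation is the more rigorous of the two: the paper gets the formula by integrating \(\dot x_i/x_i=g_i\) over \([0,\tau]\), which tacitly assumes \(x_i>0\) on that whole interval, i.e.\ the conclusion itself, whereas showing \(\frac{d}{d\tau}\bigl(x_i e^{-\int_0^\tau g_i}\bigr)=0\) needs no sign information at all.
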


\begin{proof}
	For each \(i\), the equation can be written as
	\[
	\frac{\dot{x}_i(\tau)}{x_i(\tau)}=g_i(\tau)
	\]
	whenever \(x_i(\tau)>0\). Integrating on \([0,\tau]\), we obtain
	\[
	x_i(\tau)
	=
	x_i(0)
	\exp\left(
	\int_{0}^{\tau}
	g_i(s)\,ds
	\right).
	\]
	Since \(x_i(0)=\varphi_i(0)>0\), and the exponential term is strictly positive,
	we have \(x_i(\tau)>0\) for all \(\tau\in[0,T_{\max})\).
\end{proof}

	The positivity result does not depend on the signs of the interaction
	coefficients \(a_{ij}\). It follows from the multiplicative Lotka--Volterra
	structure \(x_i(\tau)g_i(\tau)\). This is why delayed Lotka--Volterra models are
	well suited to variables that must remain non-negative. Recent studies of
	delayed ecological and cooperative systems also rely on positivity and
	permanence properties as preliminary steps before stability analysis
	\cite{xu2022delayedlv,JiangHalikMuhammadhaji2023NSpeciesDelays}.

\subsection{Equilibria}

An equilibrium of \eqref{eq:general-delayed-lv} is a constant solution
\[
x(\tau)\equiv x^\ast.
\]
Since \(x_j(\tau-\delta_{ij})=x_j^\ast\) for constant solutions, the delays do
not affect the algebraic equations defining equilibria. They affect stability,
but not the location of equilibria.

Substituting \(x(\tau)\equiv x^\ast\) into \eqref{eq:general-delayed-lv}, we
obtain
\begin{equation}
\label{eq:equilibrium-component}
0
=
x_i^\ast
\left(
r_i+\sum_{j=1}^{n}a_{ij}x_j^\ast
\right),
\qquad i=1,\ldots,n.
\end{equation}

Therefore, for each component \(i\), either
\[
x_i^\ast=0,
\]
or
\[
r_i+\sum_{j=1}^{n}a_{ij}x_j^\ast=0.
\]

\begin{definition}[Admissible equilibrium]
	An equilibrium \(x^\ast\in\mathbb{R}^{n}_{+}\) is called admissible if all of its
	components are non-negative. It is called an interior, or positive, equilibrium
	if
	\[
	x^\ast\in\mathbb{R}^{n}_{++}.
	\]
	It is called a boundary equilibrium if at least one component of \(x^\ast\) is
	zero.
\end{definition}

\begin{proposition}[Interior equilibrium]
	\label{prop:interior-equilibrium}
	Assume that \(x^\ast\in\mathbb{R}^{n}_{++}\). Then \(x^\ast\) is an equilibrium
	of \eqref{eq:general-delayed-lv} if and only if
	\begin{equation}
	\label{eq:interior-equilibrium}
	r+Ax^\ast=0,
	\end{equation}
	where
	\[
	r=(r_1,\ldots,r_n)^{T},
	\qquad
	A=(a_{ij})_{i,j=1}^{n}.
	\]
	If \(A\) is invertible, then the unique candidate interior equilibrium is
	\begin{equation}
	\label{eq:positive-eq-formula}
	x^\ast=-A^{-1}r.
	\end{equation}
	This equilibrium is admissible and interior if and only if
	\[
	-A^{-1}r\in\mathbb{R}^{n}_{++}.
	\]
\end{proposition}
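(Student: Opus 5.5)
The plan is to reduce everything to the componentwise equilibrium condition \eqref{eq:equilibrium-component} derived just above. Because the delays drop out for constant solutions, $x(\tau)\equiv x^\ast$ is an equilibrium of \eqref{eq:general-delayed-lv} exactly when, for each $i$,
\[
x_i^\ast\Big(r_i+\sum_{j=1}^{n}a_{ij}x_j^\ast\Big)=0 .
\]
I will therefore argue each direction of the equivalence directly from this product form, and then handle the invertible case by linear algebra.

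For the forward direction, assume $x^\ast\in\mathbb{R}^n_{++}$ is an equilibrium. Then every $x_i^\ast>0$, so each factor $x_i^\ast$ can be cancelled. This leaves $r_i+\sum_j a_{ij}x_j^\ast=0$ for all $i$, which in vector form is $r+Ax^\ast=0$.

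For the converse, assume $r+Ax^\ast=0$. Then each bracket vanishes, so each product vanishes, and the constant function $x^\ast$ satisfies \eqref{eq:general-delayed-lv}. Here I will note that for constant histories $x_j(\tau-\delta_{ij})=x_j^\ast$, so the right-hand side is identically zero while $\dot x\equiv0$.

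Now suppose $A$ is invertible. The linear system $Ax^\ast=-r$ then has the unique solution $x^\ast=-A^{-1}r$. By the equivalence just proved, any interior equilibrium must equal this vector, which is why it is the unique \emph{candidate}.

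For admissibility, I will check both implications:
\begin{itemize}[label={}]
\item If $-A^{-1}r\in\mathbb{R}^n_{++}$, then the converse direction shows it is an equilibrium, and it is interior by assumption.
\item If an interior equilibrium exists, the forward direction forces it to equal $-A^{-1}r$, so $-A^{-1}r$ lies in $\mathbb{R}^n_{++}$.
\end{itemize}

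There is no real obstacle here. The only point needing care is the cancellation of $x_i^\ast$, which is legitimate precisely because the hypothesis puts $x^\ast$ in the open orthant. I will also remark that, without invertibility, interior equilibria may fail to exist or may form an affine continuum; the statement does not claim anything in that case.
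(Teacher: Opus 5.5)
Your proposal is correct and takes the same approach as the paper. Like the paper, you cancel the strictly positive factors $x_i^\ast$ in the componentwise equilibrium condition to obtain $r+Ax^\ast=0$, then use invertibility of $A$ for uniqueness; you are slightly more complete, since the paper leaves the converse direction and both implications of the feasibility equivalence implicit.
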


\begin{proof}
	If \(x^\ast\in\mathbb{R}^{n}_{++}\), then \(x_i^\ast>0\) for every \(i\). Hence
	\eqref{eq:equilibrium-component} implies
	\[
	r_i+\sum_{j=1}^{n}a_{ij}x_j^\ast=0,
	\qquad i=1,\ldots,n.
	\]
	This is exactly \eqref{eq:interior-equilibrium}. If \(A\) is invertible, the
	linear system has the unique solution
	\[
	x^\ast=-A^{-1}r.
	\]
	The condition \(x^\ast\in\mathbb{R}^{n}_{++}\) is precisely the feasibility
	condition for an interior equilibrium.
\end{proof}

	The fact that the interior equilibrium is independent of the delays is important.
	The delays \(\delta_{ij}\) do not move \(x^\ast\), but they may change its
	stability. This distinction is one of the main reasons delayed generalized
	Lotka--Volterra systems are mathematically rich: feasibility and stability may
	separate once delays are introduced \cite{saeedian2022delay,pigani2022delay}.

\subsection{Equilibria of the five-variable civilizational model}

For the five-variable model
\[
x(\tau)=
\big(C(\tau),H(\tau),P(\tau),R(\tau),L(\tau)\big)^{T},
\]
an interior equilibrium
\[
x^\ast=(C^\ast,H^\ast,P^\ast,R^\ast,L^\ast)^{T}
\]
satisfies the algebraic system
\begin{equation}
\label{eq:five-variable-equilibrium}
\begin{cases}
r_C+a_{CH}H^\ast-a_{CR}R^\ast+a_{CP}P^\ast=0,\\
r_H+a_{HC}C^\ast-a_{HL}L^\ast-a_{HR}R^\ast=0,\\
r_P+a_{PC}C^\ast+a_{PH}H^\ast-a_{PR}R^\ast=0,\\
r_R-a_{RC}C^\ast-a_{RH}H^\ast-a_{RP}P^\ast+a_{RL}L^\ast=0,\\
r_L-a_{LC}C^\ast-a_{LH}H^\ast-a_{LP}P^\ast+a_{LR}R^\ast=0.
\end{cases}
\end{equation}

Equivalently,
\[
r+Ax^\ast=0,
\]
where
\[
r=
\begin{pmatrix}
r_C\\ r_H\\ r_P\\ r_R\\ r_L
\end{pmatrix},
\]
and
\[
A=
\begin{pmatrix}
0 & a_{CH} & a_{CP} & -a_{CR} & 0\\
a_{HC} & 0 & 0 & -a_{HR} & -a_{HL}\\
a_{PC} & a_{PH} & 0 & -a_{PR} & 0\\
-a_{RC} & -a_{RH} & -a_{RP} & 0 & a_{RL}\\
-a_{LC} & -a_{LH} & -a_{LP} & a_{LR} & 0
\end{pmatrix}.
\]
If \(A\) is invertible, the candidate interior equilibrium is
\[
x^\ast=-A^{-1}r.
\]
The feasibility condition is
\[
C^\ast>0,\qquad H^\ast>0,\qquad P^\ast>0,\qquad R^\ast>0,\qquad L^\ast>0.
\]

	In this model, feasibility has a direct interpretation. A positive equilibrium
	represents a regime in which complexity, entropy, pressure, resilience, and
	legitimacy coexist at non-zero levels. Such a state is not necessarily stable.
	Its stability depends on the eigenstructure of the delayed linearization, which
	will be studied in the next section.

At a positive equilibrium \(x^\ast\), the corresponding instability level is
\[
\Omega^\ast=\Omega(x^\ast).
\]
If \(\Omega^\ast<\Theta\), the equilibrium is structurally sub-threshold; if \(\Omega^\ast\geq\Theta\), it lies on or beyond the structural instability boundary. This classification is distinct from spectral stability: \(\Omega^\ast\) measures structural stress, while the characteristic roots of the delayed linearization determine the local dynamical response.

\subsection{A sufficient boundedness condition}

Positivity alone does not guarantee boundedness. Since generalized
Lotka--Volterra systems may display persistent oscillations or more complicated
dynamics, it is useful to impose a dissipativity condition. Similar dissipative
and permanence assumptions are common in the modern literature on delayed
Lotka--Volterra systems \cite{JiangHalikMuhammadhaji2023NSpeciesDelays}.

\begin{assumption}[Diagonal dissipativity]
	\label{ass:dissipativity}
	There exist constants \(b_i>0\) and \(m_i>0\) such that, for all admissible
	states,
	\[
	r_i+\sum_{j=1}^{n}a_{ij}x_j(\tau-\delta_{ij})
	\leq
	b_i-m_i x_i(\tau-\delta_{ii}),
	\qquad i=1,\ldots,n.
	\]
\end{assumption}

\begin{theorem}[Eventual boundedness under delayed self-limitation]
	\label{thm:eventual-boundedness}
	Assume that \(\varphi\in\mathcal{C}_{++}\) and that Assumption
	\ref{ass:dissipativity} holds. Then each component \(x_i(\tau)\) is eventually
	bounded above. More precisely, for every \(\varepsilon>0\), there exists
	\(T_\varepsilon>0\) such that
	\[
	x_i(\tau)\leq \frac{b_i}{m_i}+\varepsilon,
	\qquad
	\tau\geq T_\varepsilon,
	\qquad i=1,\ldots,n.
	\]
\end{theorem}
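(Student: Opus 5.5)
The plan is a scalar comparison argument on each component, followed by a fluctuation (limsup) argument. By Theorem~\ref{thm:strict-positivity}, every $x_i$ stays strictly positive on $[0,T_{\max})$. So Assumption~\ref{ass:dissipativity} can be multiplied by $x_i(\tau)>0$, giving the differential inequality
\[
\dot x_i(\tau)\le x_i(\tau)\bigl(b_i-m_i x_i(\tau-\delta_{ii})\bigr),\qquad \tau\in[0,T_{\max}).
\]
The first step is to exclude blow-up and obtain $T_{\max}=\infty$. On any interval of length $\delta_{ii}$, the inequality gives $\dot x_i\le b_i x_i$, so $x_i$ grows at most like $e^{b_i\tau}$ and cannot blow up in finite time. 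By the continuation part of Theorem~\ref{thm:local-existence}, the solution is then global.

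The second step establishes that $\ell_i:=\limsup_{\tau\to\infty}x_i(\tau)$ is finite. This splits into two cases.

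\textbf{Case (a): $x_i$ is eventually monotone.} Then it has a limit. If that limit exceeded $b_i/m_i$, the bracket would eventually be $\le -c<0$, which forces $x_i\to0$, a contradiction.

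\textbf{Case (b): $x_i$ oscillates.} Choose a sequence of local maxima $\tau_k\to\infty$ with $x_i(\tau_k)\to\ell_i$ and $\dot x_i(\tau_k)=0$. The inequality then gives $b_i-m_i x_i(\tau_k-\delta_{ii})\ge0$, that is, $x_i(\tau_k-\delta_{ii})\le b_i/m_i$. Integrating $\dot x_i\le b_ix_i$ over $[\tau_k-\delta_{ii},\tau_k]$ yields $x_i(\tau_k)\le (b_i/m_i)e^{b_i\delta_{ii}}$. Hence $\ell_i<\infty$, and the solution is eventually bounded.

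The third step sharpens this to $\ell_i\le b_i/m_i$. Suppose instead that $\ell_i>b_i/m_i$, and fix $\varepsilon>0$ with $\ell_i>b_i/m_i+2\varepsilon$. Whenever $x_i(\tau-\delta_{ii})\ge b_i/m_i+\varepsilon$, the component $x_i$ is strictly decreasing at time $\tau$ at rate at least $m_i\varepsilon\,x_i(\tau)$. I would try to show that the set where $x_i>b_i/m_i+\varepsilon$ is eventually empty. The idea is that every excursion above this level must be preceded, $\delta_{ii}$ earlier, by a value at most $b_i/m_i$, and this should prevent sustained excursions. The conclusion $x_i(\tau)\le b_i/m_i+\varepsilon$ for $\tau\ge T_\varepsilon$ would then follow.

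I expect this last step to be the main obstacle. The difficulty is closing the gap between the delayed value $x_i(\tau-\delta_{ii})$, which the assumption controls, and the present value $x_i(\tau)$, which the statement bounds. For $\delta_{ii}=0$ the argument closes immediately: at a maximum above $b_i/m_i$ the derivative is strictly negative, so the limsup is at most $b_i/m_i$. For $\delta_{ii}>0$, the comparison equation $\dot u=u(b_i-m_iu(\tau-\delta_{ii}))$ is Hutchinson's delayed logistic equation. Its periodic solutions for $b_i\delta_{ii}>\pi/2$ overshoot $b_i/m_i$, so the stated bound cannot be derived from the differential inequality alone.

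The first thing I would try is to use the clause ``for all admissible states'' in Assumption~\ref{ass:dissipativity} to supply the instantaneous self-limitation $b_i-m_ix_i(\tau)$. If that reading is unavailable, I would aim for one of two fallbacks: prove the stated bound under the additional restriction $b_i\delta_{ii}\le 1$ via the standard Wright/Hutchinson-type estimates, or replace it by the bound $(b_i/m_i)e^{b_i\delta_{ii}}+\varepsilon$ obtained in the second step.
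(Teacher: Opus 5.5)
Your steps 1 and 2 are sound. Step 1 also supplies $T_{\max}=\infty$, which the paper's proof tacitly assumes. Your diagnosis of step 3 is also correct: it cannot be closed, because the statement is false as written.

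\textbf{Counterexample.} Take $n=1$, $r_1=b$, $a_{11}=-m$, $\delta_{11}=\delta$ with $b\delta>\pi/2$. Assumption~\ref{ass:dissipativity} holds with equality.
\begin{itemize}
\item The substitution $x(\tau)=(b/m)\,(1+y(\tau/\delta))$ turns the equation into Wright's equation $y'(s)=-b\delta\,y(s-1)\,(1+y(s))$. By Jones's theorem, this has a nonconstant periodic solution with $1+y>0$.
\item At a maximum point $\tau_0$ of the corresponding positive periodic $x$, the condition $\dot x(\tau_0)=0$ forces $x(\tau_0-\delta)=b/m$.
\item If $\max x$ were equal to $b/m$, then $\dot x\ge 0$ everywhere, and a nondecreasing periodic function is constant. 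Hence $\max x>b/m$.
\item Taking a segment of this orbit as initial history violates the claimed bound for every $\varepsilon<\max x-b/m$.
\end{itemize}
This agrees with Theorem~\ref{thm:critical-delay-real-negative}: the linearization at $b/m$ is $\dot u=-b\,u(\tau-\delta)$, which loses stability at $b\delta=\pi/2$.

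\textbf{Where the paper's proof fails.} It obtains $\limsup x_i\le b_i/m_i$ by citing a ``standard comparison principle for scalar delay differential inequalities.'' Such principles require the right-hand side to be nondecreasing in the delayed argument. Here $u\,(b-mu(\tau-\delta))$ is decreasing in it. Moreover, even the comparison equation (Hutchinson's) violates the bound. This is exactly the gap between $x_i(\tau-\delta_{ii})$ and $x_i(\tau)$ that you isolated.

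\textbf{Your fallbacks.}
\begin{itemize}
\item The clause ``for all admissible states'' cannot convert $x_i(\tau-\delta_{ii})$ into $x_i(\tau)$. Quantifying over states only yields coefficient conditions such as $r_i\le b_i$, $a_{ii}\le -m_i$, and $a_{ij}\le 0$ for $j\neq i$. It therefore helps only when $\delta_{ii}=0$, where your logistic argument is complete.
\item Restricting to $b_i\delta_{ii}\le 1$ and quoting Wright-type convergence results is not safe either. Those results concern the scalar equation with equality. Assumption~\ref{ass:dissipativity} gives only an inequality, under which the other components may repeatedly push $x_i$ down. After each recovery across $b_i/m_i$, the delayed term still permits growth for a further time $\delta_{ii}$, so the scalar results do not transfer.
\item Your third option is the repair that is actually provable:
\[
\limsup_{\tau\to\infty}x_i(\tau)\le \frac{b_i}{m_i}\,e^{b_i\delta_{ii}}.
\]
This is the standard bound for delayed Lotka--Volterra systems, and it is exactly what your step 2 establishes.
\end{itemize}
Alternatively, one can keep the bound $b_i/m_i$ but assume instantaneous self-limitation, $\delta_{ii}=0$.
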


\begin{proof}
	Under Assumption \ref{ass:dissipativity},
	\[
	\dot{x}_i(\tau)
	\leq
	x_i(\tau)
	\left(
	b_i-m_i x_i(\tau-\delta_{ii})
	\right).
	\]
	The right-hand side is dominated by a delayed logistic-type inequality. By the
	standard comparison principle for scalar delay differential inequalities, the
	limsup of \(x_i(\tau)\) satisfies
	\[
	\limsup_{\tau\to\infty}x_i(\tau)\leq \frac{b_i}{m_i}.
	\]
	Therefore, for every \(\varepsilon>0\), there exists \(T_\varepsilon>0\) such
	that
	\[
	x_i(\tau)\leq \frac{b_i}{m_i}+\varepsilon
	\]
	for all \(\tau\geq T_\varepsilon\).
\end{proof}


Summarizing, the delayed generalized Lotka--Volterra model preserves the non-negative cone
and therefore remains consistent with the interpretation of the variables as
non-negative structural quantities. Equilibria are obtained from the same
algebraic equations as in the non-delayed model:
\[
r+Ax^\ast=0
\]
for interior equilibria. Hence delays do not change the location of equilibria.
They change the dynamical response around those equilibria.
The next section studies the delayed linearization around an admissible
equilibrium. This is where the delays \(\delta_{ij}\) become decisive: they enter
the characteristic equation and may destabilize an otherwise stable equilibrium.

 \section{Linear Stability and Delay--Induced Instability}

 \label{sec:linearization-local-stability}
 
 We now study the local behavior of the delayed generalized Lotka--Volterra
 system near a positive equilibrium. The goal of this section is twofold. First,
 we derive the linearized delayed system and the corresponding characteristic
 equation. Second, we show how delays may preserve local stability for small
 values but may destabilize the equilibrium when they become sufficiently large.
 This is consistent with the general theory of delay differential equations,
 where stability is determined by the roots of a transcendental characteristic
 equation rather than by the eigenvalues of a finite-dimensional matrix alone
 \cite{hale1993introduction,fukuda2022stability}.
 
 Consider the delayed generalized Lotka--Volterra system
 \eq{eq:general-delayed-lv}
 where $x_i(\tau)\geq 0$, $r_i\in\mathbb{R}$, $a_{ij}\in\mathbb{R}$, and
 $\delta_{ij}\geq 0$. We denote
 \[
 \delta_{\max}:=\max_{1\leq i,j\leq n}\delta_{ij}.
 \]
 The initial condition is given by a continuous history function
 \[
 x_i(\tau)=\varphi_i(\tau),
 \qquad
 \tau\in[-\delta_{\max},0],
 \qquad
 i=1,\ldots,n.
 \]
 
 Let $x^\ast=(x_1^\ast,\ldots,x_n^\ast)$ be a positive equilibrium, namely
 \[
 x_i^\ast>0,
 \qquad i=1,\ldots,n.
 \]
 Since $x_i^\ast>0$, the equilibrium condition is
 \begin{equation}
 \label{eq:positive-equilibrium-condition}
 r_i+\sum_{j=1}^{n}a_{ij}x_j^\ast=0,
 \qquad i=1,\ldots,n.
 \end{equation}
 Equivalently, in vector form,
 \begin{equation}
 \label{eq:equilibrium-vector-form}
 r+Ax^\ast=0.
 \end{equation}
 If $A$ is invertible, then
 \begin{equation}
 \label{eq:equilibrium-explicit}
 x^\ast=-A^{-1}r.
 \end{equation}
 The equilibrium is biologically, socially, or structurally meaningful only when
 \[
 -A^{-1}r\in\mathbb{R}^n_+.
 \]
 
 
 Define the perturbation variables
 \[
 u_i(\tau)=x_i(\tau)-x_i^\ast,
 \qquad i=1,\ldots,n.
 \]
 Then
 \[
 x_i(\tau)=x_i^\ast+u_i(\tau).
 \]
 Substituting this expression into \eqref{eq:general-delayed-lv}, we obtain
 \[
 \dot{u}_i(\tau)
 =
 \left(x_i^\ast+u_i(\tau)\right)
 \left[
 r_i+
 \sum_{j=1}^{n}
 a_{ij}
 \left(
 x_j^\ast+u_j(\tau-\delta_{ij})
 \right)
 \right].
 \]
 Using the equilibrium condition \eqref{eq:positive-equilibrium-condition}, this becomes
 \[
 \dot{u}_i(\tau)
 =
 \left(x_i^\ast+u_i(\tau)\right)
 \sum_{j=1}^{n}
 a_{ij}u_j(\tau-\delta_{ij}).
 \]
 Therefore, the linear part is
 \begin{equation}
 \label{eq:linearized-system}
 \dot{u}_i(\tau)
 =
 x_i^\ast
 \sum_{j=1}^{n}
 a_{ij}u_j(\tau-\delta_{ij}),
 \qquad i=1,\ldots,n.
 \end{equation}
 The nonlinear remainder is
 \[
 N_i(u_\tau)
 =
 u_i(\tau)
 \sum_{j=1}^{n}
 a_{ij}u_j(\tau-\delta_{ij}),
 \]
 which is of quadratic order near the origin.
 
 Let
 \[
 D^\ast:=\operatorname{diag}(x_1^\ast,\ldots,x_n^\ast).
 \]
 When all delays are zero, the linearized system reduces to the ordinary
 differential equation
 \[
 \dot{u}(\tau)=D^\ast A u(\tau).
 \]
 We define
 \begin{equation}
 \label{eq:B-matrix}
 B:=D^\ast A.
 \end{equation}
 Thus $B$ is the interaction matrix of the linearized non-delayed system.
 
 \begin{proposition}[Linearization]
 	\label{prop:linearization}
 	Let $x^\ast\in\mathbb{R}^n_+$ be a positive equilibrium of
 	\eqref{eq:general-delayed-lv}. Then the linearization of the delayed
 	generalized Lotka--Volterra system around $x^\ast$ is
 	\[
 	\dot{u}_i(\tau)
 	=
 	x_i^\ast
 	\sum_{j=1}^{n}
 	a_{ij}u_j(\tau-\delta_{ij}),
 	\qquad i=1,\ldots,n.
 	\]
 	In vector notation, this can be written as
 	\[
 	\dot{u}(\tau)=\mathcal{L}u_\tau,
 	\]
 	where $\mathcal{L}:C([-\delta_{\max},0],\mathbb{R}^n)\to\mathbb{R}^n$ is the
 	bounded linear operator defined componentwise by
 	\[
 	(\mathcal{L}\phi)_i
 	=
 	x_i^\ast
 	\sum_{j=1}^{n}
 	a_{ij}\phi_j(-\delta_{ij}).
 	\]
 \end{proposition}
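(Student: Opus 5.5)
The plan is to regard the delayed system \eqref{eq:general-delayed-lv} as a retarded functional differential equation $\dot x(\tau)=F(x_\tau)$ on the phase space $\mathcal{C}=C([-\delta_{\max},0],\mathbb{R}^n)$, with $F$ as in the proof of Theorem~\ref{thm:local-existence}. Near the constant history $\hat x^\ast(s)\equiv x^\ast$, I will show that $F$ is Fr\'echet differentiable and that its derivative at $\hat x^\ast$ is precisely the operator $\mathcal{L}$ in the statement. The linearization is then, by definition, $\dot u=\mathcal{L}u_\tau$. The componentwise formula follows by writing out $\mathcal{L}\phi$.

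\emph{Step 1 (exact perturbation equation).} Set $u=x-x^\ast$ and use the equilibrium condition \eqref{eq:positive-equilibrium-condition}, which applies because $x^\ast$ is positive. Exactly as in the computation preceding \eqref{eq:linearized-system}, for every $\phi\in\mathcal{C}$ this gives the identity
\[
F_i(\hat x^\ast+\phi)=x_i^\ast\sum_{j=1}^n a_{ij}\phi_j(-\delta_{ij})+\phi_i(0)\sum_{j=1}^n a_{ij}\phi_j(-\delta_{ij}),
\]
which holds with no approximation. Its first term is $(\mathcal{L}\phi)_i$ and its second term is $N_i(\phi)$. Note also that $F(\hat x^\ast)=0$.

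\emph{Step 2 ($\mathcal{L}$ is bounded and linear).} Each point evaluation $\phi\mapsto\phi_j(-\delta_{ij})$ is linear. It satisfies $|\phi_j(-\delta_{ij})|\le\|\phi\|_\infty$ because $-\delta_{ij}\in[-\delta_{\max},0]$. Hence $|(\mathcal{L}\phi)_i|\le x_i^\ast\sum_j|a_{ij}|\,\|\phi\|_\infty$. It follows that $\mathcal{L}$ is a bounded linear operator from $\mathcal{C}$ to $\mathbb{R}^n$, with norm at most $\max_i x_i^\ast\sum_j|a_{ij}|$ in the max norm on $\mathbb{R}^n$.

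\emph{Step 3 (the remainder is quadratic).} The remainder satisfies $|N_i(\phi)|\le\big(\sum_j|a_{ij}|\big)\|\phi\|_\infty^2$. Therefore $\|F(\hat x^\ast+\phi)-\mathcal{L}\phi\|=o(\|\phi\|_\infty)$ as $\phi\to0$, so $DF(\hat x^\ast)=\mathcal{L}$ in the Fr\'echet sense. Since the identity in Step 1 is exact, this also shows $F$ is $C^1$; in fact $F$ is a polynomial in point evaluations. This is the hypothesis needed by the standard linearization theory for retarded FDEs \cite{hale1993introduction}.

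No step is a real obstacle. The only point needing care is the Fr\'echet differentiability with respect to the supremum norm on histories, as opposed to mere pointwise differentiation in the state. Step 3 settles this, since the delayed evaluations are controlled uniformly by $\|\phi\|_\infty$. Finally, in the common-delay case $\delta_{ij}=\delta$ the operator becomes $\mathcal{L}\phi=D^\ast A\phi(-\delta)=B\phi(-\delta)$, which recovers \eqref{eq:common-delay-linearization} as a consistency check.
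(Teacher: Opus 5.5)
Your proof is correct and follows the same route as the paper. You substitute $x=x^\ast+u$ and use $r+Ax^\ast=0$ to get the exact identity $\dot u_i=(x_i^\ast+u_i)\sum_j a_{ij}u_j(\tau-\delta_{ij})$, split it into the linear part and the quadratic remainder $N_i$, and your Steps 2--3 (boundedness of $\mathcal{L}$, and the bound $|N_i(\phi)|\le\sum_j|a_{ij}|\,\|\phi\|_\infty^2$ giving Fr\'echet differentiability in the supremum norm) make explicit what the paper's one-line proof leaves implicit.
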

 
 \begin{proof}
 	The result follows by expanding the right-hand side of
 	\eqref{eq:general-delayed-lv} around $x^\ast$ and using the equilibrium
 	condition \eqref{eq:positive-equilibrium-condition}. The first-order terms are
 	precisely those in \eqref{eq:linearized-system}, while the remaining terms are
 	quadratic in the perturbation variables.
 \end{proof}
 
 \subsection{Characteristic equation}
 
 We look for exponential solutions of the linearized system in the form
 \[
 u(\tau)=v e^{\lambda \tau},
 \qquad v\in\mathbb{C}^n\setminus\{0\}.
 \]
 Substitution into \eqref{eq:linearized-system} gives
 \[
 \lambda v_i
 =
 x_i^\ast
 \sum_{j=1}^{n}
 a_{ij}v_j e^{-\lambda\delta_{ij}},
 \qquad i=1,\ldots,n.
 \]
 Equivalently,
 \[
 \sum_{j=1}^{n}
 \left[
 \lambda\delta_{ij}^{K}
 -
 x_i^\ast a_{ij}e^{-\lambda\delta_{ij}}
 \right]v_j=0,
 \]
 where $\delta_{ij}^{K}$ denotes the Kronecker delta. Hence the characteristic
 equation is
 \begin{equation}
 \label{eq:characteristic-equation-general}
 \Delta(\lambda)
 :=
 \det
 \left[
 \lambda I
 -
 M(\lambda)
 \right]
 =0,
 \end{equation}
 where
 \begin{equation}
 \label{eq:M-lambda}
 M_{ij}(\lambda)
 =
 x_i^\ast a_{ij}e^{-\lambda\delta_{ij}}.
 \end{equation}
 
 \begin{definition}[Spectral bound]
 	The spectral bound of the linearized delayed system is
 	\[
 	s(\mathcal{L})
 	:=
 	\sup
 	\left\{
 	\operatorname{Re}\lambda:
 	\Delta(\lambda)=0
 	\right\}.
 	\]
 	The equilibrium $x^\ast$ is said to be linearly asymptotically stable if
 	\[
 	s(\mathcal{L})<0.
 	\]
 \end{definition}
 
 The characteristic equation \eqref{eq:characteristic-equation-general} is
 transcendental whenever at least one delay is positive. Consequently, it has
 infinitely many complex roots in general. This is the essential difference
 between the delayed and non-delayed systems. In the non-delayed case, stability
 is determined by finitely many eigenvalues of $B=D^\ast A$. In the delayed
 case, stability is determined by the location of infinitely many characteristic
 roots \cite{hale1993introduction,fukuda2022stability}.
 
 \begin{theorem}[Linearized stability principle]
 	\label{thm:linearized-stability-principle}
 	Let $x^\ast$ be a positive equilibrium of
 	\eqref{eq:five-variable-delayed-system}. If all roots of the characteristic equation
 	\[
 	\Delta(\lambda)=0
 	\]
 	satisfy
 	\[
 	\operatorname{Re}\lambda<0,
 	\]
 	then $x^\ast$ is locally asymptotically stable. If at least one characteristic
 	root satisfies
 	\[
 	\operatorname{Re}\lambda>0,
 	\]
 	then $x^\ast$ is unstable.
 \end{theorem}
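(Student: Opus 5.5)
The plan is to reduce Theorem~\ref{thm:linearized-stability-principle} to the classical linearized stability theorem for retarded functional differential equations, as in \cite{hale1993introduction}. I will first put the system \eqref{eq:five-variable-delayed-system}, which is \eqref{eq:general-delayed-lv} with $n=5$ and a particular sign pattern in $A$, into the abstract form $\dot x(\tau)=F(x_\tau)$ on $\mathcal{C}$. Here $F$ is the functional vector field from the proof of Theorem~\ref{thm:local-existence}. Translating by the equilibrium, $u=x-x^\ast$, gives
\[
\dot u(\tau)=\mathcal{L}u_\tau+N(u_\tau),
\]
with $\mathcal{L}$ as in Proposition~\ref{prop:linearization} and $N_i(\phi)=\phi_i(0)\sum_j a_{ij}\phi_j(-\delta_{ij})$. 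Since $F$ is a polynomial in finitely many point evaluations, it is $C^1$ (indeed $C^\infty$) on $\mathcal{C}$, with $DF(x^\ast)=\mathcal{L}$. Moreover $|N(\phi)|\le K\|\phi\|_\infty^2$, so $N(\phi)=o(\|\phi\|_\infty)$ uniformly. These are exactly the hypotheses of the principle of linearized stability for RFDEs.

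Next I will analyze the linear semigroup $T(\tau)$ generated by $\dot u=\mathcal{L}u_\tau$ on $\mathcal{C}$. The spectrum of its generator consists only of eigenvalues, and these are the zeros of $\Delta(\lambda)$ in \eqref{eq:characteristic-equation-general}. This follows by the exponential-ansatz computation already carried out: one solves the resolvent equation explicitly, and its solvability fails exactly when $\det[\lambda I-M(\lambda)]=0$. The key finiteness fact is that for any $\beta\in\mathbb{R}$ only finitely many roots satisfy $\operatorname{Re}\lambda>\beta$. Indeed, $|M_{ij}(\lambda)|\le x_i^\ast|a_{ij}|e^{-\operatorname{Re}\lambda\,\delta_{ij}}$ is bounded on such a half-plane, so $\lambda I-M(\lambda)$ is invertible for $|\lambda|$ large, and $\Delta$ is entire and not identically zero. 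Hence if all roots have negative real part, then $s(\mathcal{L})<0$ is attained, and one can pick $0<\alpha<-s(\mathcal{L})$. Since $T(\tau)$ is eventually compact (for $\tau\ge\delta_{\max}$), the spectral mapping theorem then gives $\|T(\tau)\|\le Ke^{-\alpha\tau}$.

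For the stability half, I will write the nonlinear solution through the variation-of-constants formula $u_\tau=T(\tau)u_0+\int_0^\tau T(\tau-s)X_0N(u_s)\,ds$. The bound on $T$, the quadratic estimate on $N$, and Gronwall's inequality then give exponential decay for $\|u_0\|_\infty$ small. For the instability half, suppose a root has $\operatorname{Re}\lambda>0$. The finite-dimensional generalized eigenspace $P$ for roots in $\{\operatorname{Re}\lambda>0\}$ yields a decomposition $\mathcal{C}=P\oplus Q$ invariant under $T$, with an exponential dichotomy. I would then either construct a local unstable manifold, or argue directly that the projection onto $P$ grows for small initial data in $P$ while the nonlinearity stays dominated; either route gives instability.

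I expect the main obstacle to be rigor rather than ideas: the spectral theory of $T(\tau)$ (identifying the spectrum with the zeros of $\Delta$, the decomposition, and the semigroup bound) is substantial. I would not reprove it but cite Hale--Verduyn Lunel, Chapter~7, Theorems~6.1--6.2 of \cite{hale1993introduction}, after checking its hypotheses. These hypotheses are that $F\in C^1$ with $F(x^\ast)=0$ and that $\mathcal{L}$ is given by a normalized bounded-variation kernel, here a finite sum of Dirac masses at $-\delta_{ij}$. One further point to note is that asymptotic stability is asked for the positive equilibrium within $\mathcal{C}$. Small perturbations of $x^\ast>0$ remain in $\mathcal{C}_{++}$ by Theorem~\ref{thm:strict-positivity}, so the result is consistent with the positive-orthant interpretation of the model.
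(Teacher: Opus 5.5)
Your proposal is correct and takes the same route as the paper, whose proof simply invokes the standard principle of linearized stability for retarded functional differential equations from Hale--Verduyn Lunel. You go further by actually verifying its hypotheses (smoothness of $F$ with $DF(x^\ast)=\mathcal{L}$, the quadratic bound on $N$, finiteness of characteristic roots in every right half-plane, eventual compactness of the solution semigroup) and by sketching the variation-of-constants/Gronwall and spectral-splitting arguments that the paper leaves implicit; the only imprecision is calling the $P\oplus Q$ splitting an exponential dichotomy, since $Q$ may carry roots with $\operatorname{Re}\lambda=0$, but the spectral gap you do have is all the instability argument needs.
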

 
 \begin{proof}
 	This is the standard linearized stability principle for retarded functional
 	differential equations. The nonlinear terms in the perturbation equation are
 	quadratic near the equilibrium, while the linear part generates the local
 	asymptotic behavior. If the spectral bound of the linearized equation is
 	negative, the zero solution of the perturbation equation decays exponentially.
 	If the spectral bound is positive, an unstable mode grows exponentially. See,
 	for example, the general theory of retarded functional differential equations
 	\cite{hale1993introduction} and recent multidimensional stability criteria for
 	linear delay systems \cite{fukuda2022stability}.
 \end{proof}
 
 \subsection{Small--Delay Stability}
 
 The first useful consequence is that if the non-delayed equilibrium is strongly
 stable, then sufficiently small delays do not immediately destroy local
 stability. This is important for the interpretation of the model: small
 recognition delays may be absorbed by the system, whereas large delays may
 produce destabilization.
 
 \begin{theorem}[Small-delay stability]
 	\label{thm:small-delay-stability}
 	Assume that the matrix
 	\[
 	B=D^\ast A
 	\]
 	is Hurwitz, namely
 	\[
 	\operatorname{Re}\lambda_k(B)<0
 	\qquad
 	\text{for all eigenvalues } \lambda_k(B).
 	\]
 	Then there exists $\varepsilon>0$ such that, if
 	\[
 	0\leq \delta_{ij}<\varepsilon
 	\qquad
 	\text{for all } i,j,
 	\]
 	the positive equilibrium $x^\ast$ of the delayed system
 	\eqref{eq:five-variable-delayed-system} is locally asymptotically stable.
 \end{theorem}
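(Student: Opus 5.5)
By Theorem \ref{thm:linearized-stability-principle}, it suffices to show that for all sufficiently small delays every root of $\Delta(\lambda)=\det[\lambda I-M(\lambda)]$ satisfies $\operatorname{Re}\lambda<0$. The approach is a continuity/perturbation argument in $\lambda$, split into a bounded region and a neighbourhood of infinity in the closed right half-plane $\overline{\mathbb{C}}_+:=\{\operatorname{Re}\lambda\geq 0\}$. The key structural fact is that for $\operatorname{Re}\lambda\geq 0$ we have $|e^{-\lambda\delta_{ij}}|\leq 1$. Hence
\[
\|M(\lambda)\|\leq \|D^\ast A\|=:K \qquad \text{uniformly in } \operatorname{Re}\lambda\geq 0 \text{ and in all } \delta_{ij}\geq 0 .
\]
Consequently, any root with $\operatorname{Re}\lambda\geq 0$ is an eigenvalue of the matrix $M(\lambda)$, so it satisfies $|\lambda|\leq K$. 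This a priori bound removes the usual difficulty of infinitely many roots: all potentially dangerous roots lie in the compact set
\[
\mathcal{K}=\{\lambda:\operatorname{Re}\lambda\geq 0,\ |\lambda|\leq K\}.
\]

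Next, we compare $\Delta(\lambda)$ with the non-delayed polynomial $p(\lambda)=\det(\lambda I-B)$. Since $B$ is Hurwitz, $p$ has no zeros in $\mathcal{K}$, and by compactness $m:=\min_{\mathcal{K}}|p(\lambda)|>0$. On $\mathcal{K}$ we have
\[
|e^{-\lambda\delta_{ij}}-1|\leq |\lambda|\,\delta_{ij}\leq K\varepsilon \qquad \text{whenever } \delta_{ij}<\varepsilon,
\]
so $\|M(\lambda)-B\|\leq C_1\varepsilon$ uniformly on $\mathcal{K}$, with $C_1$ depending only on $x^\ast$ and $A$. The determinant is a polynomial in the matrix entries, which are bounded on $\mathcal{K}$ by a constant depending only on $K$. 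Hence it is uniformly Lipschitz there, giving
\[
|\Delta(\lambda)-p(\lambda)|\leq C_2\varepsilon \quad\text{on } \mathcal{K}.
\]
Choosing $\varepsilon<m/C_2$ yields $|\Delta(\lambda)|>0$ on $\mathcal{K}$. Therefore $\Delta$ has no roots with $\operatorname{Re}\lambda\geq 0$.

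It remains to conclude that the spectral bound $s(\mathcal{L})$ is strictly negative. For retarded equations this is immediate: in any half-plane $\operatorname{Re}\lambda\geq -\sigma$ there are only finitely many characteristic roots. Since none lies in $\operatorname{Re}\lambda\geq 0$, the supremum of the real parts is attained and is negative. Theorem \ref{thm:linearized-stability-principle} then gives local asymptotic stability of $x^\ast$ for system \eqref{eq:five-variable-delayed-system}.

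The step I expect to be the main obstacle is making the perturbation estimate \emph{uniform}. Near infinity, $e^{-\lambda\delta}$ is not close to $1$, so a naive Rouché-type comparison fails on unbounded sets. This is exactly why the a priori bound $|\lambda|\leq K$ on roots in the right half-plane must be established first, before any comparison with $p$ is made. Because that bound holds independently of the delays, $\varepsilon$ can be chosen uniformly. This approach works only for retarded (not neutral) systems, which is the present case.
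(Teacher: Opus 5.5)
Your proof is correct and follows the same two-part strategy as the paper's proof. Because the equation is retarded, no roots can enter $\operatorname{Re}\lambda\geq 0$ from infinity; on the remaining bounded region, closeness to $\det(\lambda I-B)$ rules out roots. The difference is that you actually prove both ingredients, namely the delay-independent bound $|\lambda|\leq K$ and the uniform estimate $|\Delta(\lambda)-p(\lambda)|\leq C_2\varepsilon$ on $\mathcal{K}$, whereas the paper only cites continuity of characteristic roots in the delays.

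One small repair is needed: take $K=\|B\|_\infty$ (the max row sum), or any norm that is monotone in the moduli of the entries. For the spectral norm, $|M_{ij}(\lambda)|\leq|B_{ij}|$ does not imply $\|M(\lambda)\|_2\leq\|B\|_2$. The rest of the argument is unaffected.
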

 
 \begin{proof}
 	When $\delta_{ij}=0$ for all $i,j$, the characteristic equation becomes
 	\[
 	\det(\lambda I-B)=0.
 	\]
 	By assumption, all roots lie in the open left half-plane. Since characteristic
 	roots of retarded delay differential equations depend continuously on the
 	delays on bounded subsets of the complex plane, sufficiently small perturbations
 	of the delays cannot move any characteristic root across the imaginary axis.
 	Moreover, for retarded equations, no roots can enter the right half-plane from
 	infinity under sufficiently small delay perturbations. Therefore, there exists
 	$\varepsilon>0$ such that all roots of \eqref{eq:characteristic-equation-general}
 	remain in the open left half-plane whenever
 	$0\leq\delta_{ij}<\varepsilon$. The conclusion follows from
 	Theorem~\ref{thm:linearized-stability-principle}.
 \end{proof}

 	The previous theorem should not be read as saying that delay is harmless.
 	Rather, it says that local stability is robust only for sufficiently small
 	delays. Once delays become large enough, the exponential factors
 	$e^{-\lambda\delta_{ij}}$ may move characteristic roots toward or across the
 	imaginary axis. This is precisely the mathematical expression of delayed
 	recognition becoming dynamically dangerous.

 \subsection{Common-delay case and Lambert \(W\) representation}
 
 A more explicit analysis is possible when all interaction delays are equal:
 \[
 \delta_{ij}=\delta
 \qquad
 \text{for all } i,j.
 \]
 Then the linearized system becomes
 \begin{equation}
 \label{eq:common-delay-linearized}
 \dot{u}(\tau)=B u(\tau-\delta),
 \end{equation}
 where $B=D^\ast A$. The characteristic equation is \eq{eq:common-delay-characteristic}.
 If $\rho_1,\ldots,\rho_n$ are the eigenvalues of $B$, then
 \eqref{eq:common-delay-characteristic} is equivalent to
 \begin{equation}
 \label{eq:scalar-characteristic-rho}
 \lambda=\rho_k e^{-\lambda\delta},
 \qquad k=1,\ldots,n.
 \end{equation}
 Thus
 \[
 \lambda e^{\lambda\delta}=\rho_k.
 \]
 Multiplying by $\delta$, we obtain
 \[
 \lambda\delta e^{\lambda\delta}=\rho_k\delta.
 \]
 Therefore,
 \begin{equation}
 \label{eq:lambda-lambert}
 \lambda_{k,m}(\delta)
 =
 \frac{1}{\delta}
 W_m(\rho_k\delta),
 \qquad
 m\in\mathbb{Z},
 \end{equation}
 where $W_m$ is the $m$-th branch of the Lambert $W$ function. 

 The Lambert \(W\) function is defined as the inverse function of
 \[
 f(W)=W e^{W}.
 \]
 Thus, for a complex number \(z\), the Lambert function \(W(z)\) is defined implicitly by
 \[
 W(z)e^{W(z)}=z.
 \]
 Equivalently,
 \[
 W(z)=w
 \quad \Longleftrightarrow \quad
 w e^w=z.
 \]
 
 Since the map \(w\mapsto we^w\) is not one-to-one on the complex plane, the Lambert function is multivalued. Its branches are denoted by
 \[
 W_k(z), \qquad k\in\mathbb{Z}.
 \]
 The principal branch is denoted by
 \[
 W_0(z).
 \]
 
 In the context of delay differential equations, the Lambert \(W\) function appears naturally when solving characteristic equations of the form
 \[
 \lambda=\rho e^{-\lambda\delta}.
 \]
 Multiplying by \(e^{\lambda\delta}\), one obtains
 \[
 \lambda e^{\lambda\delta}=\rho.
 \]
 Multiplying by \(\delta\), this becomes
 \[
 \lambda\delta e^{\lambda\delta}=\rho\delta.
 \]
 By the definition of the Lambert \(W\) function,
 \[
 \lambda\delta = W_k(\rho\delta),
 \]
 and therefore
 \[
 \lambda=\frac{1}{\delta}W_k(\rho\delta),
 \qquad k\in\mathbb{Z}.
 \]
 Thus, each branch \(W_k\) gives a possible characteristic root of the delayed system.

 Lambert
 \(W\)-based representations are often useful for the spectral analysis of
 linear delay equations, especially when reducing a delayed characteristic
 equation to explicit root branches \cite{ruzgas2024lambert}.
 
 \begin{theorem}[Common-delay spectral criterion]
 	\label{thm:common-delay-spectral}
 	Assume that all delays are equal to $\delta>0$. Let
 	$\rho_1,\ldots,\rho_n$ be the eigenvalues of $B=D^\ast A$. Then the positive
 	equilibrium $x^\ast$ is locally asymptotically stable if and only if
 	\[
 	\operatorname{Re}
 	\left[
 	\frac{1}{\delta}W_m(\rho_k\delta)
 	\right]<0
 	\]
 	for every eigenvalue $\rho_k$ of $B$ and for every branch
 	$m\in\mathbb{Z}$ of the Lambert $W$ function.
 \end{theorem}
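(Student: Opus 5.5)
The plan is to reduce the transcendental equation \eqref{eq:common-delay-characteristic} to the scalar equations \eqref{eq:scalar-characteristic-rho}. Those scalar roots are given by the Lambert branches \eqref{eq:lambda-lambert}. Once that is done, the linearized stability principle, Theorem~\ref{thm:linearized-stability-principle}, finishes the argument in one direction.

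\emph{Step one: factor the characteristic function.} Triangularize $B$: choose an invertible (Schur or Jordan) matrix $Q$ with $Q^{-1}BQ=T$ upper triangular and $\operatorname{diag}(T)=(\rho_1,\ldots,\rho_n)$. Then
\[
\det(\lambda I-Be^{-\lambda\delta})=\det(\lambda I-Te^{-\lambda\delta})=\prod_{k=1}^{n}\left(\lambda-\rho_k e^{-\lambda\delta}\right).
\]
So $\lambda$ is a characteristic root exactly when $\lambda=\rho_k e^{-\lambda\delta}$ for some $k$.

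\emph{Step two: match the scalar roots with the Lambert branches.} The scalar equation is equivalent to $(\lambda\delta)e^{\lambda\delta}=\rho_k\delta$. By the definition of $W$, its solution set is exactly $\{W_m(\rho_k\delta)/\delta:m\in\mathbb{Z}\}$, and every branch value gives a solution. For $\rho_k=0$ the only root is $\lambda=0$, and this agrees with $W_0(0)=0$. The set $\{\lambda:\Delta(\lambda)=0\}$ is therefore precisely the union of all these branch values. The spectral condition ``all roots have $\operatorname{Re}\lambda<0$'' thus coincides with the branch condition in the statement. The characteristic equation here is the one for the linearization \eqref{eq:common-delay-linearized}, as given by Proposition~\ref{prop:linearization}.

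\emph{Step three: sufficiency.} Suppose every branch value has negative real part. Every root of the retarded equation then lies in the open left half-plane. Roots of retarded equations have real parts bounded above and only finitely many lie in any right half-plane, so the spectral bound $s(\mathcal{L})$ is attained and is negative. Theorem~\ref{thm:linearized-stability-principle} then gives local asymptotic stability.

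\emph{Step four: necessity, the main obstacle.} Suppose some branch has $\operatorname{Re}\ge0$. If the real part is strictly positive, Theorem~\ref{thm:linearized-stability-principle} gives instability. The critical case $\operatorname{Re}\lambda=0$ is not covered by linearization. An example is $\rho_k=0$ (singular $B$), or a purely imaginary root at a Hopf threshold. In that case the nonlinear equilibrium may still be asymptotically stable, so the ``only if'' genuinely fails for nonlinear stability in general. I would handle this by reading ``locally asymptotically stable'' in the statement as linear asymptotic stability, meaning $s(\mathcal{L})<0$ as in the definition of spectral bound. Under that reading, a root with $\operatorname{Re}\lambda=0$ produces the non-decaying solution $ve^{\lambda\tau}$ of the linearized system, and the equivalence is exact. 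I would add a remark that nonlinear local asymptotic stability is decided by the branch criterion except on this boundary, where a center-manifold analysis is needed.
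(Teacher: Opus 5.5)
Your proposal follows the same route as the paper's proof: you split $\det(\lambda I-Be^{-\lambda\delta})$ into the scalar equations $\lambda=\rho_k e^{-\lambda\delta}$, read off their roots from the Lambert branches, and apply Theorem~\ref{thm:linearized-stability-principle}. You are also more careful than the paper: triangularization covers non-diagonalizable $B$, you show the spectral bound is attained, and you correctly note that the paper's ``precisely when'' holds only for \emph{linear} asymptotic stability, since at a supercritical Hopf threshold (e.g.\ $\dot x=x(1-x(\tau-\delta))$ at $\delta=\pi/2$) the nonlinear equilibrium can still be asymptotically stable. One small correction to your remark: $\rho_k=0$ is not such an example here, because $D^\ast$ is invertible, so a singular $B$ gives some $v\neq 0$ with $Av=0$, and then $x^\ast+tv$ (small $|t|$) is a segment of positive equilibria through $x^\ast$, which already rules out asymptotic stability.
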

 
 \begin{proof}
 	For a common delay, the characteristic equation separates into the scalar
 	equations
 	\[
 	\lambda=\rho_k e^{-\lambda\delta},
 	\qquad k=1,\ldots,n.
 	\]
 	Each scalar equation has the root representation
 	\[
 	\lambda_{k,m}(\delta)
 	=
 	\frac{1}{\delta}W_m(\rho_k\delta),
 	\qquad m\in\mathbb{Z}.
 	\]
 	The equilibrium is locally asymptotically stable precisely when all
 	characteristic roots have negative real part. This gives the stated condition.
 \end{proof}

 \subsection{Delay-induced loss of stability}
 
 The common-delay case shows clearly how a stable non-delayed equilibrium may
 lose stability when delay increases. To make this mechanism explicit, consider
 the case in which $B$ has a real negative eigenvalue
 \[
 \rho=-a,
 \qquad a>0.
 \]
 The corresponding scalar characteristic equation is
 \[
 \lambda=-a e^{-\lambda\delta},
 \]
 or equivalently
 \begin{equation}
 \label{eq:scalar-negative-characteristic}
 \lambda+a e^{-\lambda\delta}=0.
 \end{equation}
 
 \begin{theorem}[Critical delay for a real negative mode]
 	\label{thm:critical-delay-real-negative}
 	Let $\rho=-a$, with $a>0$, be a real negative eigenvalue of $B$. The scalar
 	mode associated with $\rho$ is asymptotically stable for
 	\[
 	0\leq \delta<\frac{\pi}{2a}.
 	\]
 	At the critical value
 	\[
 	\delta^\ast=\frac{\pi}{2a},
 	\]
 	the characteristic equation has a pair of purely imaginary roots
 	\[
 	\lambda=\pm ia.
 	\]
 	Moreover, this crossing is transversal.
 \end{theorem}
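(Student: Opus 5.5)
We prove the three claims about the scalar characteristic equation \eqref{eq:scalar-negative-characteristic}, $\lambda+ae^{-\lambda\delta}=0$, in turn: stability for small delays, the first imaginary crossing, and transversality. The approach is the classical continuation argument, in which roots can enter the right half-plane only by crossing the imaginary axis.

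\emph{Stability for $0\le\delta<\pi/(2a)$.} At $\delta=0$ the only root is $\lambda=-a<0$. For $\delta>0$ the quasi-polynomial is retarded, so roots depend continuously on $\delta$ and none come from infinity into $\operatorname{Re}\lambda\ge 0$. This is the same mechanism invoked in the proof of Theorem~\ref{thm:small-delay-stability}. Hence the number of roots in the right half-plane can change only when a root lies on the imaginary axis. $\lambda=0$ is never a root, since $a\neq 0$. Substitute $\lambda=i\omega$ with $\omega>0$; roots come in conjugate pairs, so this suffices. Separating real and imaginary parts gives
\[
a\cos(\omega\delta)=0,\qquad \omega=a\sin(\omega\delta).
\]
Then $\sin(\omega\delta)=\pm1$ and $\omega>0$ force $\omega=a$ and $a\delta=\pi/2+2k\pi$ for some $k\ge 0$. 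The smallest delay admitting imaginary roots is therefore $\delta^\ast=\pi/(2a)$. For $\delta<\delta^\ast$ all roots stay in the open left half-plane, which gives the first claim. The second claim also follows: at $\delta^\ast$ we have $\lambda=\pm ia$, and a direct check confirms $ia+ae^{-i\pi/2}=ia-ia=0$.

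\emph{Transversality.} Differentiate $\lambda+ae^{-\lambda\delta}=0$ implicitly with respect to $\delta$:
\[
\frac{d\lambda}{d\delta}\bigl(1-a\delta e^{-\lambda\delta}\bigr)=a\lambda e^{-\lambda\delta}.
\]
Substitute $ae^{-\lambda\delta}=-\lambda$, which gives
\[
\frac{d\lambda}{d\delta}=\frac{-\lambda^2}{1+\lambda\delta}.
\]
The denominator does not vanish at $\lambda=ia$, so the root is simple and $\lambda(\delta)$ is a smooth branch there by the implicit function theorem. At $\lambda=ia$ and $\delta=\delta^\ast$,
\[
\frac{d\lambda}{d\delta}=\frac{a^2}{1+ia\delta^\ast}=\frac{a^2(1-ia\delta^\ast)}{1+a^2\delta^{\ast2}},
\]
whose real part is $a^2/(1+\pi^2/4)>0$. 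The root therefore crosses from left to right with nonzero speed, and the conjugate root $-ia$ does the same by symmetry. This proves transversality.

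The main obstacle is making the first step rigorous: the continuity of the root set, together with the absence of roots arriving from infinity in $\operatorname{Re}\lambda\ge0$. This can be justified directly. If $\operatorname{Re}\lambda\ge0$, then $|\lambda|=a|e^{-\lambda\delta}|\le a$, so all candidate unstable roots lie in a fixed compact set. Rouché's theorem, or the argument principle on that compact set, then gives continuity of the root count as $\delta$ varies.
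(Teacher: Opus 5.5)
Your proof is correct and follows the paper's route. Like the paper, you substitute $\lambda=i\omega$ to locate the crossing at $\omega=a$, $\delta^\ast=\pi/(2a)$, and then differentiate the characteristic equation implicitly to get $d\lambda/d\delta=-\lambda^2/(1+\lambda\delta)$, whose real part at $\lambda=ia$ is $a^2/(1+a^2(\delta^\ast)^2)>0$. You also fill two gaps the paper leaves open: it asserts $\omega\delta=\pi/2$ without mentioning the branches $a\delta=\pi/2+2k\pi$, and it never proves stability for $\delta<\pi/(2a)$, which your bound $|\lambda|\le a$ on $\operatorname{Re}\lambda\ge 0$ combined with the argument principle establishes.
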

 
 \begin{proof}
 	Substituting $\lambda=i\omega$ into
 	\eqref{eq:scalar-negative-characteristic} gives
 	\[
 	i\omega+a e^{-i\omega\delta}=0.
 	\]
 	Separating real and imaginary parts yields
 	\[
 	a\cos(\omega\delta)=0,
 	\]
 	and
 	\[
 	\omega-a\sin(\omega\delta)=0.
 	\]
 	Thus
 	\[
 	\omega\delta=\frac{\pi}{2}
 	\quad
 	\text{and}
 	\quad
 	\omega=a.
 	\]
 	Hence
 	\[
 	\delta^\ast=\frac{\pi}{2a}.
 	\]
 	To verify transversality, differentiate
 	\[
 	\lambda=\rho e^{-\lambda\delta}
 	\]
 	with respect to $\delta$. Since $\rho e^{-\lambda\delta}=\lambda$, we obtain
 	\[
 	\frac{d\lambda}{d\delta}
 	=
 	\lambda
 	\left(
 	-\delta\frac{d\lambda}{d\delta}
 	-\lambda
 	\right).
 	\]
 	Therefore,
 	\[
 	\frac{d\lambda}{d\delta}
 	=
 	-\frac{\lambda^2}{1+\lambda\delta}.
 	\]
 	At $\lambda=ia$ and $\delta=\pi/(2a)$,
 	\[
 	\operatorname{Re}
 	\left(
 	\frac{d\lambda}{d\delta}
 	\right)
 	=
 	\frac{a^2}{1+a^2(\delta^\ast)^2}
 	>0.
 	\]
 	The characteristic roots cross the imaginary axis from left to right as
 	$\delta$ increases. Hence the crossing is transversal.
 \end{proof}

 	Theorem~\ref{thm:critical-delay-real-negative} gives a precise mathematical
 	meaning to delay-induced instability. Even a stabilizing negative mode can
 	become oscillatory and unstable if its effect is delayed beyond a critical
 	threshold. In the civilizational interpretation, a stabilizing institutional
 	response may cease to stabilize the system if it arrives too late.


 The spectral mechanism of delay-induced instability is shown in
 Figure~\ref{fig:root-crossing}. For small delays, all characteristic roots lie
 in the left half-plane. At a critical delay, a conjugate pair crosses the
 imaginary axis, producing oscillatory instability.
 
 \begin{figure}[ht]
 	\centering
 	\begin{tikzpicture}[scale=1.1]
 	\draw[->] (-3,0) -- (3,0) node[right] {$\operatorname{Re}\lambda$};
 	\draw[->] (0,-2.2) -- (0,2.2) node[above] {$\operatorname{Im}\lambda$};
 	
 	\draw[thick,->] (-1.8,1.2) .. controls (-0.9,1.1) and (-0.25,0.85) .. (0,0.9);
 	\draw[thick,->] (-1.8,-1.2) .. controls (-0.9,-1.1) and (-0.25,-0.85) .. (0,-0.9);
 	
 	\filldraw (-1.8,1.2) circle (1.5pt);
 	\filldraw (-1.8,-1.2) circle (1.5pt);
 	\filldraw (0,0.9) circle (1.5pt);
 	\filldraw (0,-0.9) circle (1.5pt);
 	
 	\node at (-2.25,1.45) {$\delta<\delta^\ast$};
 	\node at (1.25,1.0) {$\lambda=ia$};
 	\node at (1.25,-1.0) {$\lambda=-ia$};
 	\node at (0.6,0.25) {$\delta=\delta^\ast$};
 	
 	\draw[dashed] (0,0.9) -- (0,-0.9);
 	\end{tikzpicture}
 	\caption{Schematic root crossing caused by increasing delay. A stable pair of
 		roots moves toward the imaginary axis and crosses it at the critical delay
 		$\delta^\ast$, generating oscillatory instability.}
 	\label{fig:root-crossing}
 \end{figure}
 
 The theorem shows that a stabilizing negative mode may become destabilizing solely because of delay. In the language of the present model, even if the instantaneous effect of resilience, legitimacy, or institutional correction is stabilizing, excessive delay may turn the correction mechanism into a source of oscillatory instability.

\subsection{Multidimensional sufficient condition}

The previous theorem can be extended to a common-delay multidimensional system when the spectrum of $B$ is real and negative.

\begin{theorem}[Sufficient stability condition for real negative modes]
	\label{thm:multidimensional-critical-delay}
	Assume that all delays are equal to $\delta>0$ and that $B=D(x^\ast)A$ is diagonalizable with real negative eigenvalues
	\[
	\rho_k=-a_k,
	\qquad a_k>0,
	\qquad k=1,\ldots,n.
	\]
	If
	\[
	0\leq \delta < \min_{1\leq k\leq n}\frac{\pi}{2a_k},
	\]
	then the equilibrium $x^\ast$ is locally asymptotically stable.
	Moreover, if for some $k$,
	\[
	\delta=\frac{\pi}{2a_k},
	\]
	then the corresponding spectral mode reaches a Hopf-type stability boundary.
\end{theorem}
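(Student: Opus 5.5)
The plan is to reduce the multidimensional characteristic equation to $n$ scalar equations and then apply the scalar analysis of Theorem~\ref{thm:critical-delay-real-negative} to each mode. Since all delays equal $\delta$, the linearization is \eqref{eq:common-delay-linearized}, $\dot u(\tau)=Bu(\tau-\delta)$. Because $B$ is diagonalizable, we may write $B=V\,\mathrm{diag}(\rho_1,\ldots,\rho_n)V^{-1}$ with $V$ invertible. The change of variables $w=V^{-1}u$ decouples the system into the scalar delay equations
\[
\dot w_k(\tau)=-a_k\,w_k(\tau-\delta),\qquad k=1,\ldots,n .
\]
Correspondingly, $\det(\lambda I-Be^{-\lambda\delta})=\prod_{k}(\lambda+a_k e^{-\lambda\delta})$. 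The characteristic roots of \eqref{eq:common-delay-characteristic} are therefore exactly the union over $k$ of the roots of $\lambda+a_k e^{-\lambda\delta}=0$, as already noted in \eqref{eq:scalar-characteristic-rho}.

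Next, for each fixed $k$, we show that every root of $\lambda+a_ke^{-\lambda\delta}=0$ has negative real part when $0\le\delta<\pi/(2a_k)$. The argument is a continuation in $\delta$.
\begin{enumerate}
\item At $\delta=0$ the only root is $\lambda=-a_k<0$.
\item For small $\delta>0$, the additional roots enter from $\operatorname{Re}\lambda=-\infty$. This is the retarded-type property invoked in Theorem~\ref{thm:small-delay-stability}: from $|\lambda|=a_k e^{-\delta\operatorname{Re}\lambda}$, any root with $\operatorname{Re}\lambda\ge 0$ satisfies $|\lambda|\le a_k$, so roots in the closed right half-plane stay in a bounded set.
\item Roots depend continuously on $\delta$ and cannot come from infinity into the right half-plane. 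Hence a root can reach $\operatorname{Re}\lambda\ge0$ only by passing through the imaginary axis.
\item $\lambda=0$ is never a root, since $a_k\neq 0$. If $\lambda=i\omega$ with $\omega\neq0$, the computation in the proof of Theorem~\ref{thm:critical-delay-real-negative} forces $\cos(\omega\delta)=0$ and $\omega=a_k\sin(\omega\delta)$. Thus $|\omega|=a_k$, and $\delta=(\pi/2+2\pi j)/a_k$ for some integer $j\ge 0$ (using symmetry in $\pm\omega$).
\item The smallest such value is $\pi/(2a_k)$. So no root crosses the imaginary axis for $\delta<\pi/(2a_k)$, and the whole scalar spectrum stays in the open left half-plane.
\end{enumerate}

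Taking $\delta<\min_k\pi/(2a_k)$, this holds simultaneously for every mode. Hence all roots of $\Delta(\lambda)=0$ have negative real part, and Theorem~\ref{thm:linearized-stability-principle} gives local asymptotic stability of $x^\ast$.

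For the second claim, when $\delta=\pi/(2a_k)$ the factor $\lambda+a_ke^{-\lambda\delta}$ has the roots $\pm ia_k$. Theorem~\ref{thm:critical-delay-real-negative} shows that this pair crosses transversally with $\operatorname{Re}(d\lambda/d\delta)>0$, which is the Hopf-type boundary for that mode. If $\pi/(2a_k)$ is the smallest critical delay, all other factors still have roots strictly in the left half-plane. I would add the remark that simplicity of the pair $\pm ia_k$ as roots of $\Delta$ requires $a_k$ to be distinct from the other $a_j$ sharing that critical value. Since the statement only asserts that the mode reaches a Hopf-type boundary, this caveat is not needed for the claim itself.

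The step I expect to be the main obstacle is the rigorous exclusion of roots entering from infinity or lingering in the right half-plane during the continuation. I would handle it with the a priori bound in step 2, which confines any root with $\operatorname{Re}\lambda\ge0$ to the disc $|\lambda|\le a_k$. Rouché's theorem or the argument principle on that compact region then gives continuity of the root count.
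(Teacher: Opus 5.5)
Your proposal is correct and follows the same route as the paper: diagonalize $B$ to decouple the common-delay linearization into scalar modes $\dot y_k(\tau)=-a_k y_k(\tau-\delta)$, apply Theorem~\ref{thm:critical-delay-real-negative} to each mode, and take the minimum over $k$. The one difference is that you actually prove the scalar stability range on $[0,\pi/(2a_k))$, which the paper's proof of the scalar theorem asserts without argument. You do this with the a priori bound $|\lambda|\le a_k$ for $\operatorname{Re}\lambda\ge 0$, the enumeration of all crossing delays $(\pi/2+2\pi j)/a_k$, and continuity of the root count, so your version is more complete than the paper's.
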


\begin{proof}
	Since $B$ is diagonalizable, the common-delay linearized system can be decomposed into modal equations of the form
	\[
	\dot y_k(\tau)=\rho_k y_k(\tau-\delta)
	=
	-a_k y_k(\tau-\delta).
	\]
	By Theorem \ref{thm:critical-delay-real-negative}, the $k$-th mode is asymptotically stable whenever
	\[
	\delta<\frac{\pi}{2a_k}.
	\]
	Therefore, all modes are asymptotically stable if
	\[
	\delta<\min_{1\leq k\leq n}\frac{\pi}{2a_k}.
	\]
	At equality for one mode, the associated pair of characteristic roots lies on the imaginary axis, giving a Hopf-type stability boundary.
\end{proof}

	Theorem \ref{thm:multidimensional-critical-delay} is intentionally conservative. It assumes real negative spectral modes and a common delay. The general case with complex eigenvalues or multiple interaction delays requires direct analysis of the transcendental characteristic equation \eqref{eq:scalar-negative-characteristic}. Nevertheless, the theorem captures the basic mechanism: delay can destabilize a system that is stable in the absence of delay.

By Theorem~\ref{thm:critical-delay-real-negative}, the scalar stability boundary is transversal. Hence, under the usual nondegeneracy assumptions, the critical delay may be interpreted as a Hopf-type transition from monotone stabilization to oscillatory response.

 \subsection{Interpretation for the instability functional}
 
 The local stability of the equilibrium $x^\ast$ has a direct implication for
 the instability functional
 \[
 \Omega(\tau)
 =
 \alpha C(\tau)
 +
 \beta H(\tau)
 +
 \gamma P(\tau)
 -
 \eta R(\tau)
 -
 \mu L(\tau).
 \]
 If $x^\ast$ is locally asymptotically stable and
 \[
 \Omega(x^\ast)<\Theta,
 \]
 then trajectories starting sufficiently close to $x^\ast$ remain below the
 instability threshold for sufficiently small perturbations. In contrast, if
 delay destabilizes $x^\ast$, then the variables may oscillate or drift away
 from the stable configuration. In that case, even if
 \[
 \Omega(x^\ast)<\Theta,
 \]
 there may exist times $\tau$ such that
 \[
 \Omega(\tau)\geq \Theta.
 \]
 
 Thus local spectral stability provides a mathematical condition under which
 delayed recognition remains controlled. Loss of spectral stability provides a
 mechanism by which delayed recognition becomes a source of structural
 instability. This connects the local analysis of the delayed differential
 system with the threshold interpretation of civilizational instability.
 
 \begin{proposition}[Local threshold safety]
 	\label{prop:local-threshold-safety}
 	Assume that $x^\ast$ is locally asymptotically stable and that
 	\[
 	\Omega(x^\ast)<\Theta.
 	\]
 	Then there exists a neighborhood $U$ of $x^\ast$ such that, for every initial
 	history sufficiently close to $x^\ast$, the corresponding solution satisfies
 	\[
 	\Omega(x(\tau))<\Theta
 	\]
 	for all sufficiently large $\tau$.
 \end{proposition}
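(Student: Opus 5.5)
The plan is to combine two ingredients: continuity of the linear functional $\Omega$ on $\mathbb{R}^5$, and the definition of local asymptotic stability for the retarded system in the phase space $\mathcal{C}$. First, $\Omega(x)=\alpha C+\beta H+\gamma P-\eta R-\mu L$ is linear, so it is Lipschitz:
\[
|\Omega(x)-\Omega(x^\ast)|\le K\|x-x^\ast\|,\qquad K:=\|(\alpha,\beta,\gamma,-\eta,-\mu)\|.
\]
Set $\sigma:=\Theta-\Omega(x^\ast)>0$. If $K=0$ the claim is trivial, since then $\Omega\equiv 0<\Theta$ because $\Theta>0$. Otherwise choose $r:=\sigma/(2K)$ and define $U:=\{x:\|x-x^\ast\|<r\}$. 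Every $x\in U$ then satisfies $\Omega(x)\le \Omega(x^\ast)+\sigma/2<\Theta$.

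Second, local asymptotic stability of $x^\ast$ will be used in its standard RFDE sense (as in the proof of Theorem~\ref{thm:linearized-stability-principle}). It supplies some $\rho>0$ with the following property: if $\|\varphi-x^\ast\|_\infty<\rho$, then the solution given by Theorem~\ref{thm:local-existence} exists for all $\tau\ge 0$ and satisfies $x(\tau)\to x^\ast$ as $\tau\to\infty$. Stability, not just attraction, additionally gives $\rho'\le\rho$ such that histories within $\rho'$ of $x^\ast$ keep the solution inside $U$ for all $\tau\ge0$. Using $\rho'$ as the closeness threshold would strengthen the conclusion to all $\tau\ge 0$, but only the attraction part is needed for the stated claim.

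Third, take any history with $\|\varphi-x^\ast\|_\infty<\rho$. Since $x(\tau)\to x^\ast$, there is $T$ such that $\|x(\tau)-x^\ast\|<r$ for $\tau\ge T$. By the first step, $\Omega(x(\tau))<\Theta$ for all $\tau\ge T$, which is the assertion. The histories should be taken in $\mathcal{C}_{+}$, and Theorem~\ref{thm:positive-invariance} keeps the trajectory in $\mathbb{R}^5_+$. This matters only because $\Omega$ and the regimes $\mathcal{S}_\Theta$ are defined on $\mathbb{R}^5_+$; because $x^\ast$ is interior, $U$ can be shrunk to lie in $\mathbb{R}^5_{++}$.

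The main obstacle is not computational. It is making precise what "locally asymptotically stable" means for the delayed system: the neighborhood must be taken in the sup-norm on histories in $\mathcal{C}$, not in $\mathbb{R}^5$. The statement's phrase "neighborhood $U$ of $x^\ast$" has to be read as the sublevel ball in state space, while "sufficiently close" refers to $\|\varphi-x^\ast\|_\infty$. I would spell this out explicitly, noting that the constant history $x^\ast$ is the equilibrium point in $\mathcal{C}$. Otherwise the argument is a direct continuity argument.
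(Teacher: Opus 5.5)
Your argument is correct and follows the paper's proof: continuity of $\Omega$ gives a neighborhood $U$ of $x^\ast$ on which $\Omega<\Theta$, and local asymptotic stability forces solutions from nearby histories to enter and remain in $U$ for all large $\tau$. Your explicit radius $r=\sigma/(2K)$ and your distinction between the sup-norm closeness of histories and the state-space neighborhood $U$ are a more careful version of the same argument.
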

 
 \begin{proof}
 	Since $x^\ast$ is locally asymptotically stable, solutions starting sufficiently
 	close to $x^\ast$ converge to $x^\ast$. Since $\Omega$ is continuous and
 	$\Omega(x^\ast)<\Theta$, there exists a neighborhood $U$ of $x^\ast$ such that
 	\[
 	\Omega(x)<\Theta
 	\qquad
 	\text{for all } x\in U.
 	\]
 	For initial histories sufficiently close to $x^\ast$, the solution eventually
 	enters and remains in $U$. Therefore,
 	\[
 	\Omega(x(\tau))<\Theta
 	\]
 	for all sufficiently large $\tau$.
 \end{proof}

 	Proposition~\ref{prop:local-threshold-safety} does not exclude transient
 	threshold crossings. If the initial perturbation is large, or if the convergence
 	toward equilibrium is oscillatory, the instability functional may temporarily
 	approach or exceed the threshold before returning to the stable regime.
 	Therefore, in applications, local asymptotic stability should be complemented
 	by numerical simulations and basin-of-attraction estimates.

The delayed generalized Lotka--Volterra system may be locally stable near $x^\ast$, while the functional $\Omega$ remains below the threshold $\Theta$. However, delay-induced instability creates a mechanism through which oscillations in the variables may drive $\Omega$ above the threshold.

Let
\[
\Omega^\ast=\Omega(x^\ast).
\]
Assume initially that
\[
\Omega^\ast<\Theta.
\]
A small perturbation around $x^\ast$ gives
\[
\Omega(\tau)-\Omega^\ast
=
\nabla\Omega(x^\ast)\cdot u(\tau)
+
o(\|u(\tau)\|),
\]
where
\[
\nabla\Omega
=
(\alpha,\beta,\gamma,-\eta,-\mu).
\]
If the delayed linearized system is stable, then
\[
u(\tau)\to 0
\]
and hence
\[
\Omega(\tau)\to \Omega^\ast<\Theta.
\]
If delay destabilizes the equilibrium, however, oscillations or growth of $u(\tau)$ may lead to
\[
\Omega(\tau)\geq \Theta
\]
for some $\tau$. Thus, the delay does not only affect local stability. It may determine whether the instability threshold is eventually crossed.

\begin{proposition}[Delay-induced threshold crossing]
	\label{prop:delay-threshold-crossing}
	Let $x^\ast$ be a positive equilibrium satisfying
	\[
	\Omega(x^\ast)<\Theta.
	\]
	Assume that the non-delayed linearization is asymptotically stable, but that for some delay value $\delta>\delta_c$ the delayed linearization has a characteristic root with positive real part. If the unstable mode has nonzero projection on $\nabla\Omega(x^\ast)$, then arbitrarily small perturbations may generate trajectories for which
	\[
	\Omega(\tau)\geq\Theta
	\]
	at some later time, provided the perturbation persists within the validity range of the linear approximation long enough to amplify.
\end{proposition}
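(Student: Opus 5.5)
The plan is to construct an explicit perturbation along the unstable mode and track the linear quantity $\nabla\Omega(x^\ast)\cdot u(\tau)$. This quantity is the first-order part of $\Omega(\tau)-\Omega^\ast$.

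\emph{Step 1: an unstable eigensolution.} By hypothesis, for the given $\delta>\delta_c$ there is a root $\lambda=\sigma+i\omega$ of $\Delta(\lambda)=0$ with $\sigma>0$. Let $v\in\mathbb{C}^n\setminus\{0\}$ be a null vector of $\lambda I-M(\lambda)$. Then $\operatorname{Re}(ve^{\lambda\tau})$ is a real solution of the linearized system in Proposition~\ref{prop:linearization}. Write $g:=\nabla\Omega(x^\ast)=(\alpha,\beta,\gamma,-\eta,-\mu)$. The nonzero-projection hypothesis is read as $c:=g\cdot v\neq 0$. Then
\[
g\cdot \operatorname{Re}\big(\varepsilon v e^{\lambda\tau}\big)=\varepsilon|c|e^{\sigma\tau}\cos(\omega\tau+\arg c).
\]
If $\omega\neq0$, this oscillates with amplitude $\varepsilon|c|e^{\sigma\tau}$ and takes the value $\varepsilon|c|e^{\sigma\tau}$ at a sequence of times going to infinity. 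If $\omega=0$, we replace $\varepsilon$ by $\pm\varepsilon$ so that the sign is positive.

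\emph{Step 2: choice of initial history.} Take $\varphi(s)=x^\ast+\varepsilon\operatorname{Re}(ve^{\lambda s})$ for $s\in[-\delta_{\max},0]$. For small $\varepsilon$ this is a positive history, so it lies in $\mathcal{C}_{++}$. By Theorem~\ref{thm:local-existence} and Theorem~\ref{thm:strict-positivity}, the corresponding solution exists and stays positive. Its distance from $x^\ast$ is $O(\varepsilon)$, which is "arbitrarily small". Define the deviation $u=x-x^\ast$ and compare it with the linear solution $u_{\rm lin}(\tau)=\varepsilon\operatorname{Re}(ve^{\lambda\tau})$. The perturbation equation has the form $\dot u=\mathcal{L}u_\tau+N(u_\tau)$ with $|N|\le K\|u_\tau\|^2$. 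A variation-of-constants estimate, with Gronwall applied to the fundamental solution, gives
\[
\|u(\tau)-u_{\rm lin}(\tau)\|\le K'\varepsilon^2e^{2\sigma'\tau}
\]
for any $\sigma'>s(\mathcal{L})$, as long as $\|u\|$ stays below a fixed radius $\rho_0$.

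\emph{Step 3: reaching the threshold.} Set $\kappa:=\Theta-\Omega^\ast>0$. The functional $\Omega$ is linear, so $\Omega(\tau)-\Omega^\ast=g\cdot u(\tau)$ exactly, with no $o(\|u\|)$ term. We therefore need
\[
g\cdot u_{\rm lin}(\tau)-|g|\,\|u-u_{\rm lin}\|\ge\kappa
\]
at some time $\tau$. This is the main obstacle. Two things work against it. First, the nonlinear error grows like $\varepsilon^2e^{2\sigma'\tau}$ while the signal grows like $\varepsilon e^{\sigma\tau}$, so the signal dominates only for $\varepsilon e^{\sigma\tau}\ll1$. Second, $\kappa$ is a fixed, non-infinitesimal gap. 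The crossing therefore cannot be deduced purely from linearization unless $\kappa$ is itself within the validity range. This is exactly the proviso written into the statement.

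I would handle this as follows. Let $\rho_0$ be the largest radius on which the quadratic remainder satisfies $K'\rho_0\le \tfrac12|c|/|g|$. Interpret "the perturbation persists within the validity range long enough" as the condition $\kappa\le \tfrac12|c|\rho_0/\|v\|$. Under that condition, choose $\tau_1$ with $\varepsilon\|v\|e^{\sigma\tau_1}\approx\rho_0$, taken at a peak of the cosine; the peaks are spaced by $2\pi/\omega$, which costs only a bounded factor. At that time the signal is at least $|c|\rho_0/\|v\|$ up to constants, and the error is at most half of it. This yields $\Omega(\tau_1)\ge\Theta$. By continuity of $\Omega(x(\tau))$, there is then a first crossing time $\tau^\ast\le\tau_1$ in the sense of Definition~\ref{def:threshold-crossing}.

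Under the hypothesis as stated, the nonlinear effect is assumed not to spoil the linear amplification, so the argument reduces to the linear estimate of Step 1 plus the remainder bound of Step 2. Making the validity-range proviso precise is where the real care is needed.
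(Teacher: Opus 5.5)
Your route is the paper's: you pick a characteristic root with $\operatorname{Re}\lambda>0$ and take the eigen-solution built from $ve^{\lambda\tau}$. You then use $\nabla\Omega(x^\ast)\cdot v\neq 0$ to get a component of $\Omega(\tau)-\Omega^\ast$ growing like $e^{\operatorname{Re}\lambda\,\tau}$, and let the proviso cover the passage from linear growth to reaching $\Theta$. The paper stops at that heuristic level. Two of your remarks are correct and sharpen it: $\Omega$ is affine, so $\Omega(\tau)-\Omega^\ast=g\cdot u(\tau)$ exactly with no $o(\|u\|)$ term; and the fixed gap $\kappa=\Theta-\Omega^\ast$ is exactly what the proviso must absorb.

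The quantitative layer you add does not close as written. You have $\|u-u_{\mathrm{lin}}\|\le K'\varepsilon^2e^{2\sigma'\tau}$ with $\sigma'>s(\mathcal{L})\geq\sigma$. At the time $\tau_1$ where $\varepsilon\|v\|e^{\sigma\tau_1}\approx\rho_0$, this error is of order $\rho_0^{2\sigma'/\sigma}\varepsilon^{2-2\sigma'/\sigma}$, which blows up as $\varepsilon\to0$. In Step 3 you silently use $e^{2\sigma\tau}$ in place of $e^{2\sigma'\tau}$, so the argument breaks precisely for arbitrarily small perturbations. There are two further problems. A bootstrap on a fixed radius $\|u\|\le\rho_0$ cannot produce an $\varepsilon^2$ remainder from the Duhamel integral. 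And Gronwall only gives $\|X(t)\|\lesssim e^{\|\mathcal{L}\|t}$, not a bound governed by $s(\mathcal{L})$.

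The standard repair has three parts:
\begin{enumerate}
\item Choose the unstable root so that $2\sigma>s(\mathcal{L})$, e.g.\ one of maximal real part; this exists because a retarded equation has only finitely many characteristic roots in any right half-plane. Impose the nonzero-projection hypothesis on that mode.
\item Use the spectral estimate $\|X(t)\|\le K_\eta e^{(s(\mathcal{L})+\eta)t}$, with $\eta>0$ chosen so that $s(\mathcal{L})+\eta<2\sigma$.
\item Run the bootstrap on $\|u_t\|_\infty\le 2\varepsilon\|v\|e^{\sigma t}$.
\end{enumerate}
Then
\[
\|u(\tau)-u_{\mathrm{lin}}(\tau)\|\le\int_0^\tau K_\eta e^{(s(\mathcal{L})+\eta)(\tau-t)}\,4K\varepsilon^2\|v\|^2e^{2\sigma t}\,dt\le\frac{4KK_\eta\|v\|^2}{2\sigma-s(\mathcal{L})-\eta}\,\varepsilon^2e^{2\sigma\tau},
\]
with a constant independent of $\varepsilon$. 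The bootstrap persists while $\varepsilon e^{\sigma\tau}$ stays small. Your Step 3, choosing $\tau_1$ at a peak of the cosine, then yields the crossing under a smallness condition on $\kappa$ that does not depend on $\varepsilon$; that condition is the precise form of the validity-range proviso.
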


\begin{proof}
	Let $\lambda$ be a characteristic root of the delayed linearized system with
	\[
	\operatorname{Re}\lambda>0.
	\]
	Then there exists a solution component of the linearized system of the form
	\[
	u(\tau)=ve^{\lambda\tau}+\overline{v}e^{\overline{\lambda}\tau}
	\]
	or, in the real-root case,
	\[
	u(\tau)=ve^{\lambda\tau}.
	\]
	Since
	\[
	\nabla\Omega(x^\ast)\cdot v\neq 0,
	\]
	the first-order variation
	\[
	\Omega(\tau)-\Omega^\ast
	=
	\nabla\Omega(x^\ast)\cdot u(\tau)
	+
	o(\|u(\tau)\|)
	\]
	contains a component growing like $e^{\operatorname{Re}\lambda \tau}$. Therefore, for sufficiently long time within the linear regime, the deviation of $\Omega(\tau)$ from $\Omega^\ast$ can become large enough to reach $\Theta$. This proves the claim.
\end{proof}

 For small delays, characteristic roots remain in the left half-plane. At the critical delay, a conjugate pair crosses the imaginary axis. For larger delays, the equilibrium becomes unstable (see Fig. \ref{fig:delay-threshold-functional}).

Figure \ref{fig:delay-threshold-functional} shows how the delayed instability mechanism interacts with the threshold functional $\Omega$. The structural variables may oscillate or amplify after a delay-induced loss of stability, eventually pushing $\Omega$ above the critical threshold $\Theta$.

\begin{figure}[ht]
	\centering
	\begin{tikzpicture}[>=Latex,node distance=2.1cm]
	
	\node[draw,rounded corners,align=center,minimum width=2.6cm,minimum height=1cm] (stable) {Stable equilibrium\\$x^\ast$};
	\node[draw,rounded corners,align=center,right=of stable,minimum width=2.9cm,minimum height=1cm] (delay) {Increasing delay\\$\delta\uparrow$};
	\node[draw,rounded corners,align=center,right=of delay,minimum width=3.0cm,minimum height=1cm] (roots) {Root crossing\\$\operatorname{Re}\lambda>0$};
	\node[draw,rounded corners,align=center,below=of roots,minimum width=3.2cm,minimum height=1cm] (omega) {Amplification of\\$\Omega(\tau)-\Omega^\ast$};
	\node[draw,rounded corners,align=center,left=of omega,minimum width=3.2cm,minimum height=1cm] (threshold) {Threshold crossing\\$\Omega(\tau)\geq\Theta$};
	\node[draw,rounded corners,align=center,left=of threshold,minimum width=2.8cm,minimum height=1cm] (recognition) {Recognized\\instability};
	
	\draw[->,thick] (stable) -- (delay);
	\draw[->,thick] (delay) -- (roots);
	\draw[->,thick] (roots) -- (omega);
	\draw[->,thick] (omega) -- (threshold);
	\draw[->,thick] (threshold) -- (recognition);
	
	\draw[->,thick,dashed,bend left=35] (recognition.north) to node[above,align=center] {feedback\\reaction} (stable.south);
	
	\end{tikzpicture}
	\caption{Delay-induced instability and threshold crossing. Increasing delay may destabilize the equilibrium, amplify the instability functional, and generate recognized instability.}
	\label{fig:delay-threshold-functional}
\end{figure}

\subsection{Consequences for delayed recognition}

The mathematical results support three conclusions.

First, delay can destabilize an otherwise stable equilibrium. The non-delayed system may return to equilibrium, while the delayed system may generate oscillations or instability.

Second, the critical delay acts as a recognition boundary. Below the critical delay, perturbations are absorbed. Above it, the response is too late to stabilize the system.

Third, delay-induced instability provides a mechanism for threshold crossing. The instability functional $\Omega$ may remain below $\Theta$ near equilibrium, but oscillations generated by delay may push it above the critical threshold.

Thus, delayed recognition is not only a philosophical or interpretive concept. It has a precise dynamical counterpart: delays alter the characteristic spectrum of the linearized system and may transform local stability into instability.


 
 When a conjugate pair of characteristic roots crosses the imaginary axis as the delay parameter varies, the delayed system may undergo a Hopf bifurcation. In the present framework, this corresponds to a transition from monotone stabilization to delayed oscillatory correction. Larger delays or stronger nonlinear feedbacks may generate irregular regimes. We do not claim the existence of a chaotic attractor in the present paper; we only identify delayed feedback as a mechanism compatible with complex post-bifurcation dynamics.


\section{Numerical Scenarios}

This section presents four illustrative numerical scenarios for the delayed five-variable system introduced above. The aim is not to calibrate the model on historical data, but to show how different combinations of interaction coefficients and delays may generate qualitatively distinct regimes: stable adaptation, delayed oscillations, threshold crossing, and collapse-like transition.

We consider the normalized state vector
\[
x(\tau)=\big(C(\tau),H(\tau),P(\tau),R(\tau),L(\tau)\big),
\]
where \(C\) denotes systemic complexity, \(H\) informational entropy, \(P\) systemic pressure, \(R\) resilience, and \(L\) legitimacy. The numerical experiments are based on the delayed generalized Lotka--Volterra system
\[
\dot{x}_i(\tau)
=
x_i(\tau)
\left(
r_i
+
\sum_{j=1}^{5}a_{ij}x_j(\tau-\delta_{ij})
\right),
\qquad i=1,\ldots,5.
\]
For numerical boundedness, diagonal self-regulation terms are included:
\[
a_{ii}<0,
\]
so that each variable is prevented from growing indefinitely in the absence of compensating feedback. This is standard in generalized Lotka--Volterra simulations, where diagonal damping is often used to guarantee biologically or structurally meaningful bounded trajectories \cite{saeedian2022delay,liu2023feasibility}.

The instability functional is defined as
\[
\Omega(\tau)
=
\alpha C(\tau)
+
\beta H(\tau)
+
\gamma P(\tau)
-
\eta R(\tau)
-
\mu L(\tau),
\]
with
\[
\alpha=0.35,\qquad
\beta=0.30,\qquad
\gamma=0.35,\qquad
\eta=0.30,\qquad
\mu=0.25.
\]
The instability threshold is fixed at
\[
\Theta=0.25.
\]
Thus, the system is interpreted as structurally stable while
\[
\Omega(\tau)<\Theta,
\]
and as structurally unstable once
\[
\Omega(\tau)\geq\Theta.
\]

The initial history is assumed constant on the interval \([-\delta_{\max},0]\):
\[
C(\tau)=0.45,\quad
H(\tau)=0.35,\quad
P(\tau)=0.30,\quad
R(\tau)=0.70,\quad
L(\tau)=0.75.
\]
Therefore,
\[
\Omega(0)
=
0.35(0.45)+0.30(0.35)+0.35(0.30)-0.30(0.70)-0.25(0.75)
=
-0.03.
\]
The system initially lies below the instability threshold.

The simulations are dimensionless. The variable \(\tau\) should be interpreted as a normalized chronological time, while the delays \(\delta_{ij}\) represent institutional, cognitive, informational, or adaptive response lags. The qualitative relevance of the simulations lies in the change of dynamical regime as delays and interaction strengths vary, a phenomenon widely observed in delayed differential equations and delayed Lotka--Volterra systems \cite{rihan2021dde,saeedian2022delay,chenjiang2021turinghopf}.

\subsection{Scenario 1: Stable adaptation}

The first scenario describes a system in which destabilizing variables are present, but stabilizing feedback is sufficiently strong and sufficiently fast. Complexity, entropy, and pressure remain controlled by resilience and legitimacy.

The parameter set is shown in Table~\ref{tab:scenario1}. The notation \(b_i=-a_{ii}>0\) denotes self-regulation.

\begin{table}[h!]
	\centering
	\caption{Scenario 1: stable adaptation.}
	\label{tab:scenario1}
	\begin{tabular}{lccccc}
		\toprule
		Parameter & \(C\) & \(H\) & \(P\) & \(R\) & \(L\) \\
		\midrule
		Intrinsic rate \(r_i\) & \(0.03\) & \(0.02\) & \(0.025\) & \(0.04\) & \(0.035\) \\
		Self-regulation \(b_i\) & \(0.12\) & \(0.15\) & \(0.14\) & \(0.10\) & \(0.11\) \\
		Common delay \(\delta\) & \(1\) & \(1\) & \(1\) & \(1\) & \(1\) \\
		\bottomrule
	\end{tabular}
\end{table}

The main interaction coefficients are chosen as
\[
a_{CH}=0.06,\quad
a_{CP}=0.04,\quad
a_{HC}=0.05,\quad
a_{PC}=0.06,\quad
a_{PH}=0.06,
\]
for destabilizing interactions, and
\[
a_{CR}=0.05,\quad
a_{HL}=0.06,\quad
a_{HR}=0.04,\quad
a_{PR}=0.08,
\]
for stabilizing feedback. Resilience and legitimacy mutually reinforce each other through
\[
a_{RL}=0.08,
\qquad
a_{LR}=0.07.
\]

Numerically, the instability functional decreases from
\[
\Omega(0)=-0.03
\]
to approximately
\[
\Omega(100)\approx -0.40.
\]
A representative numerical profile is given in Table~\ref{tab:omega_stable}.

\begin{table}[h!]
	\centering
	\caption{Scenario 1: representative values of \(\Omega(\tau)\).}
	\label{tab:omega_stable}
	\begin{tabular}{cccccccccc}
		\toprule
		\(\tau\) & 0 & 10 & 20 & 30 & 40 & 50 & 60 & 80 & 100 \\
		\midrule
		\(\Omega(\tau)\) 
		& \(-0.030\) & \(-0.057\) & \(-0.085\) & \(-0.115\) & \(-0.147\) & \(-0.183\) & \(-0.222\) & \(-0.308\) & \(-0.399\) \\
		\bottomrule
	\end{tabular}
\end{table}

The interpretation is straightforward. The system adapts to pressure without crossing the instability threshold. In dynamical terms, the delayed feedback is not large enough to destabilize the equilibrium. In structural terms, resilience and legitimacy absorb the growth of complexity and entropy.

\subsection{Scenario 2: Delayed oscillations}

The second scenario illustrates a system in which stabilizing feedback is still present, but delayed. The delay does not immediately destroy stability, but it produces persistent oscillations around the stable regime. This is a standard effect in delay differential equations: feedback that would stabilize an ordinary differential equation may generate oscillations when it acts after a sufficiently large delay \cite{rihan2021dde}. In delayed Lotka--Volterra systems, increasing delay may move characteristic roots across the imaginary axis, producing Hopf-type transitions \cite{saeedian2022delay,chenjiang2021turinghopf}.

We use the same initial condition as before, but increase the common delay:
\[
\delta_{ij}=8,
\qquad i,j=1,\ldots,5.
\]
The destabilizing coefficients are moderately amplified:
\[
a_{CH}=0.08,\quad
a_{CP}=0.06,\quad
a_{HC}=0.07,\quad
a_{PC}=0.08,\quad
a_{PH}=0.08.
\]
The stabilizing feedback remains active, but it is not instantaneous:
\[
a_{CR}=0.05,\quad
a_{HL}=0.06,\quad
a_{HR}=0.04,\quad
a_{PR}=0.08.
\]

A representative oscillatory trajectory of the instability functional is given in Table~\ref{tab:omega_oscillatory}.

\begin{table}[h!]
	\centering
	\caption{Scenario 2: delayed oscillations around the threshold region.}
	\label{tab:omega_oscillatory}
	\begin{tabular}{cccccccccc}
		\toprule
		\(\tau\) & 0 & 10 & 20 & 30 & 40 & 50 & 60 & 80 & 100 \\
		\midrule
		\(\Omega(\tau)\) 
		& \(-0.030\) & \(0.075\) & \(0.180\) & \(0.090\) & \(-0.020\) & \(0.110\) & \(0.230\) & \(0.070\) & \(0.160\) \\
		\bottomrule
	\end{tabular}
\end{table}

In this case, the system remains mostly below the threshold
\[
\Theta=0.25,
\]
but it approaches it repeatedly. The regime is not collapse-like, because resilience and legitimacy are still sufficient to pull the system back. However, the oscillations indicate reduced robustness: the system no longer converges smoothly toward a stable equilibrium.

The mechanism can be interpreted as follows. Pressure rises, but resilience reacts late. When resilience finally increases, it reduces pressure, but the correction occurs after the system has already moved away from equilibrium. This lag generates alternating phases of stress accumulation and delayed correction.

\subsection{Scenario 3: Threshold crossing}

The third scenario describes delayed recognition of structural instability. The system starts below the threshold, remains apparently manageable for some time, and then crosses the instability threshold.

We increase the strength of the destabilizing interactions while reducing the effective stabilizing response:
\[
a_{CH}=0.12,\quad
a_{CP}=0.08,\quad
a_{HC}=0.10,\quad
a_{PC}=0.12,\quad
a_{PH}=0.12,
\]
and
\[
a_{CR}=0.04,\quad
a_{HL}=0.04,\quad
a_{HR}=0.03,\quad
a_{PR}=0.05.
\]
The common delay is
\[
\delta_{ij}=8.
\]

The representative values of \(\Omega(\tau)\) are shown in Table~\ref{tab:omega_threshold}.

\begin{table}[h!]
	\centering
	\caption{Scenario 3: threshold crossing.}
	\label{tab:omega_threshold}
	\begin{tabular}{cccccccccc}
		\toprule
		\(\tau\) & 0 & 10 & 20 & 30 & 40 & 50 & 60 & 80 & 100 \\
		\midrule
		\(\Omega(\tau)\) 
		& \(-0.030\) & \(0.072\) & \(0.151\) & \(0.228\) & \(0.311\) & \(0.410\) & \(0.532\) & \(0.889\) & \(1.498\) \\
		\bottomrule
	\end{tabular}
\end{table}

Since
\[
\Theta=0.25,
\]
the threshold is crossed between
\[
\tau=30
\quad\text{and}\quad
\tau=40.
\]
By linear interpolation, the estimated crossing time is
\[
\tau^\ast
\approx
30+
\frac{0.25-0.228}{0.311-0.228}\cdot 10
\approx
32.65.
\]

If recognition is delayed by an additional interpretive lag
\[
\Delta=6,
\]
then the system recognizes instability only at
\[
\tau^\ast+\Delta
\approx
38.65.
\]

This is the numerical analogue of delayed recognition:
\[
\Omega(\tau^\ast)\geq \Theta
\quad
\text{but}
\quad
\text{recognized instability occurs only at } \tau^\ast+\Delta.
\]

The important point is that the threshold crossing is not the beginning of structural change. It is the moment at which accumulated change becomes dynamically and interpretively effective. The idea is consistent with the general theory of delayed differential equations, where delays can transform gradual accumulation into sudden changes of qualitative behavior \cite{rihan2021dde,saeedian2022delay}.

\subsection{Scenario 4: Collapse-like transition}

The fourth scenario represents a collapse-like transition. The term ``collapse-like'' is used carefully: the model does not claim to reproduce historical collapse in full detail. Rather, it describes a mathematical regime in which destabilizing variables amplify each other while stabilizing variables decline.

The destabilizing interactions are further strengthened:
\[
a_{CH}=0.18,\quad
a_{CP}=0.16,\quad
a_{HC}=0.15,\quad
a_{PC}=0.18,\quad
a_{PH}=0.18.
\]
The stabilizing interactions are weakened:
\[
a_{CR}=0.025,\quad
a_{HL}=0.025,\quad
a_{HR}=0.020,\quad
a_{PR}=0.030.
\]
The delay is increased to
\[
\delta_{ij}=10.
\]

A representative numerical profile is given in Table~\ref{tab:omega_collapse}.

\begin{table}[h!]
	\centering
	\caption{Scenario 4: collapse-like transition.}
	\label{tab:omega_collapse}
	\begin{tabular}{cccccccccc}
		\toprule
		\(\tau\) & 0 & 10 & 20 & 30 & 40 & 50 & 60 & 80 & 100 \\
		\midrule
		\(\Omega(\tau)\) 
		& \(-0.030\) & \(0.451\) & \(1.209\) & \(2.774\) & \(4.647\) & \(4.988\) & \(4.999\) & \(5.000\) & \(5.000\) \\
		\bottomrule
	\end{tabular}
\end{table}

In this regime, the threshold is crossed very early:
\[
\Omega(10)=0.451>\Theta.
\]
The system rapidly exits the stable domain. Complexity, entropy, and pressure reinforce each other, while resilience and legitimacy are unable to compensate. In qualitative terms, the system loses its stabilizing attractor.

This regime should be interpreted as a mathematical warning. If stabilizing feedback is weak and delayed, the system may not merely approach instability; it may accelerate into it. Similar delay-induced transitions are well known in nonlinear delayed systems, where feedback lags can generate oscillations, bifurcations, or destabilization even when the corresponding non-delayed system remains stable \cite{rihan2021dde,saeedian2022delay,li2024fsdde}.

\begin{figure}[h!]
	\centering
	\begin{tikzpicture}
	\begin{axis}[
	width=0.95\textwidth,
	height=7cm,
	xlabel={Normalized time \(\tau\)},
	ylabel={Instability functional \(\Omega(\tau)\)},
	legend pos=north east,
	grid=both,
	ymin=-0.5,
	ymax=3.3
	]
	\addplot[thick, mark=*] coordinates {
		(0,-0.030) (10,-0.057) (20,-0.085) (30,-0.115) (40,-0.147)
		(50,-0.183) (60,-0.222) (80,-0.308) (100,-0.399)
	};
	\addlegendentry{\footnotesize Scenario 1: stable adaptation}
	
	\addplot[thick, mark=square*] coordinates {
		(0,-0.030) (10,0.075) (20,0.180) (30,0.090) (40,-0.020)
		(50,0.110) (60,0.230) (80,0.070) (100,0.160)
	};
	\addlegendentry{\footnotesize Scenario 2: delayed oscillations}
	
	\addplot[thick, mark=triangle*] coordinates {
		(0,-0.030) (10,0.072) (20,0.151) (30,0.228) (40,0.311)
		(50,0.410) (60,0.532) (80,0.889) (100,1.498)
	};
	\addlegendentry{\footnotesize Scenario 3: threshold crossing}
	
	\addplot[thick, mark=diamond*] coordinates {
		(0,-0.030) (10,0.451) (20,1.209) (30,2.774) (40,4.647)
		(50,4.988) (60,4.999) (80,5.000) (100,5.000)
	};
	\addlegendentry{\footnotesize Scenario 4: collapse-like transition}
	
	\addplot[dashed, thick] coordinates {(0,0.25) (100,0.25)};
	\addlegendentry{\footnotesize Threshold \(\Theta=0.25\)}
	\end{axis}
	\end{tikzpicture}
	\caption{Representative numerical profiles of the instability functional \(\Omega(\tau)\). Scenario 1 remains safely below the instability threshold. Scenario 2 displays delayed oscillations. Scenario 3 crosses the threshold after a period of gradual accumulation. Scenario 4 rapidly enters a collapse-like regime.}
	\label{fig:numerical_scenarios_omega}
\end{figure}

\subsection*{Interpretive synthesis}

The four scenarios can be summarized as follows.

\begin{table}[h!]
	\centering
	\caption{Qualitative interpretation of the numerical scenarios.}
	\label{tab:scenario_summary}
	\begin{tabular}{lll}
		\toprule
		Scenario & Mathematical behavior & Structural interpretation \\
		\midrule
		Stable adaptation & Convergence below threshold & Resilience absorbs stress \\
		Delayed oscillations & Oscillations near threshold & Stabilizing feedback acts late \\
		Threshold crossing & \(\Omega(\tau^\ast)\geq\Theta\) & Latent instability becomes visible \\
		Collapse-like transition & Rapid divergence above threshold & Stabilizing attractor is lost \\
		\bottomrule
	\end{tabular}
\end{table}

The main conclusion is that the delay parameters \(\delta_{ij}\) are not secondary. They determine whether stabilizing feedback arrives in time. Small delays may preserve stability; intermediate delays may generate oscillations; large delays, combined with strong destabilizing interactions, may generate threshold crossing or collapse-like acceleration.

From the perspective of delayed recognition, the numerical scenarios support the central thesis of the model:
\[
\text{structural instability may precede recognized instability.}
\]
A system may cross its instability threshold at \(\tau^\ast\), while recognition occurs only at \(\tau^\ast+\Delta\). During this interval, the system continues to behave as if the previous interpretive frame were valid, even though the underlying structural condition has already changed.





\section{Conclusion}

This paper proposed a delayed generalized Lotka--Volterra model for systems in which structural change and recognition of change are temporally separated. The model is based on the idea that instability may accumulate before it becomes visible.

The main mathematical object is the delayed system \eq{eq:general-delayed-lv}
combined with an instability functional \eq{eq:omega-functional}

The system remains structurally stable while
$
\Omega(\tau)<\Theta.
$
It crosses into structural instability when
$
\Omega(\tau)\geq\Theta.
$

The delayed terms express the fact that one variable may affect another only after cognitive, institutional, informational, or adaptive delay. The mathematical analysis shows that delay may destabilize equilibria, generate oscillations, and produce nonlinear transitions. Thus, delayed recognition is not merely an interpretive phenomenon; it can be modeled as a dynamical mechanism.

\bibliographystyle{plain}

\end{document}